\newtheorem{cor}{Corollary}
\newtheorem{theorem}{Theorem}
\newtheorem{lemma}{Lemma}
\theoremstyle{remark}
\let\epsilon=\varepsilon
\let\phi=\varphi
 \let\Th=\Theta
\let\tilde=\widetilde
\newcommand{\E}{\field{E}}
\newcommand{\field}[1]{\mathbb{#1}}
\newcommand{\R}{\field{R}}
\newcommand{\e}{{\mathcal E}}
\newcommand{\D}{\mathcal{D}}
\newcommand{\fracn}{\mbox{$\frac{1}{n}$}}
\def\der^#1_#2{\frac{\partial^{#1}}{\partial {#2}^{#1}}}
\def\1{\mbox{1\hspace{-.25em}I}}
\def\t{\theta}
\def\D{\Delta}
\def\Th{\Theta}
\def\e{\varepsilon}
\def\bR{\mathbb{R}}
\def\bP{\mathbb{P}}
\title{Linear and Conic Programming Estimators in High-Dimensional Errors-in-variables Models}
\author{Alexandre Belloni\footnote{
The Fuqua School of Business,
    Duke University}, \, Mathieu Rosenbaum\footnote{Laboratoire de Probabilit\'es et Mod\`eles Al\'eatoires,  Universit\'e Pierre et Marie Curie (Paris 6)} \footnote{Centre de Recherche en Economie et Statistique,  ENSAE-Paris Tech} \, and Alexandre B. Tsybakov$^\ddag$
%\\[0.4cm]
%{\normalsize
%$^1$ The Fuqua School of Business,}  {\normalsize
%    Duke University} \\
%$~~$\\
%{\normalsize $^2$ Laboratoire de Probabilit\'es et Mod\`eles Al\'eatoires,} \\ {\normalsize
%    Universit\'e Pierre et Marie Curie (Paris 6)}\\
%    $~~$\\
%{\normalsize $^3$ Centre de Recherche en Economie et Statistique,}  {\normalsize
%ENSAE-Paris Tech}
    }
\begin{document}
\maketitle
\begin{abstract}
\noindent

We consider the linear regression model with observation error in the design. In this setting,
we allow the number of covariates to be much larger than the sample size. Several new estimation
methods have been recently introduced for this model. Indeed, the standard Lasso estimator or Dantzig selector turn out to become unreliable when only noisy regressors are available, which is quite common in practice. In this work, we propose and analyse a new estimator for the errors-in-variables model. Under suitable sparsity assumptions, we show that this estimator attains the minimax efficiency bound. Importantly, this estimator can be written as a second order cone programming minimisation problem which can be solved numerically in polynomial time. Finally, we show that the procedure introduced in \cite{RT2}, which is almost optimal in a minimax sense, can be efficiently computed by a single linear programming problem despite non-convexities.
\end{abstract}

\section{Introduction}
\noindent We consider the regression model with observation error in the
design:
%\begin{eqnarray}
%y&=& X\theta^* + \xi,\label{model1}\\
%Z&=&X+W\label{model2}.
%\end{eqnarray}
\begin{eqnarray}
y= X\theta^* + \xi, \quad \quad Z=X+W\label{model1}.
\end{eqnarray}
Here the random vector $y\in\mathbb{R}^n$ and the random $n\times p$ matrix
$Z$ are observed, the $n\times p$ matrix $X$ is unknown, $W$ is an
$n\times p$ random noise matrix, $\xi\in \mathbb{R}^n$ is a random noise
vector, and $\theta^*\in \mathbb{R}^p$ is a vector of unknown parameters to be
estimated. For example, the case where the entries of matrix $X$ are missing
at random can be reduced to this model. Such linear regressions with errors in both variables have been
widely investigated in the literature, see for example \cite{cb90,cvn99,ks73}. Our work is different in that we consider the setting where the dimension $p$ can be much larger than the sample size $n$, and $\theta^*$ is sparse.

It has been shown in \cite{RT1} that the presence of observation noise has severe consequences on
the usual estimation procedures in the high-dimensional setting. In particular, the Lasso estimator and Dantzig
selector turn out to be inaccurate and fail to identify the sparsity pattern of the vector $\theta^*$. The same paper provides an alternative procedure, called Matrix Uncertainty selector (MU selector for short),
which is robust to the presence of noise. The MU~selector $\hat \theta^{MU}$ is defined as a
solution of the minimisation problem
\begin{equation}\label{Def:MU}
\min\{ |\theta|_1: \,\, \theta\in\Theta, \,
\big|\fracn Z^T(y-Z\theta)\big|_\infty\leq \mu |\theta|_1 +\tau\},
\end{equation}
where $|\cdot|_{q}$ denotes the $\ell_q$-norm for $1\le q\leq\infty$,
$\Theta$ is a given convex subset of $\mathbb{R}^p$ characterising the prior
knowledge about $\theta^*$, and the constants $\mu\geq 0$ and $\tau\geq 0$
depend on the level of the noises $W$ and $\xi$ respectively. An extension of the MU selector to generalized linear models is discussed in \cite{SFT14}.

In \cite{RT2}, a modification of the MU selector is suggested. It applies when $W$ is a random matrix with independent and zero mean
entries $W_{ij}$ such that for any $1\leq j\leq p$, the sum of expectations
$$\sigma_j^2=\frac{1}{n}\sum_{i=1}^n \mathbb{E}(W_{ij}^2)$$
is finite and admits a data-driven estimator. This is for example the case in the model
with missing data: $\tilde Z_{ij} = X_{ij}\eta_{ij},~i=1,\dots,n,\ j=1,\dots,p,$
where for each fixed $j=1,\dots,p$, the factors $\eta_{ij},
i=1,\dots,n,$ are i.i.d. Bernoulli random variables taking the value 1
with probability $1-\pi_j$ and 0 with probability $\pi_j$,
$0<\pi_j<1$. Indeed, this model can be rewritten under the form
$Z_{ij} = X_{ij}+ W_{ij},$
where $Z_{ij}={\tilde Z}_{ij}/(1-\pi_j)$ and
$W_{ij}=X_{ij}(\eta_{ij}-(1-\pi_j))/(1-\pi_j)$. Thus, in this model, the $\sigma_j^2$ satisfy
$\sigma_j^2 = \fracn\sum_{i=1}^n X_{ij}^2 \,
\frac{\pi_j}{1-\pi_j},$ and it is easily shown that they admit good data-driven estimators $\hat\sigma_j^2$, see \cite{RT2}. There are of course other examples where $\sigma_j^2$ can be accurately estimated from the data.  One of them is related to repeated measurement models where the values of $Z$ are available on a finer time scale. We refer here to examples given in \cite{RT1}, such as portfolio replication. In the problem of portfolio replication, assuming for example that the entries of $X$ are approximately constant on the daily scale, we can readily estimate the variances using the additional finer scale measurements of $Z$ on the hourly scale.

The construction of this modified estimator is based on the following idea.
We cannot use $X$ in our estimation procedure since only its noisy version $Z$ is available.
In particular, the MU selector involves the matrix $Z^TZ/n$ instead of $X^TX/n$.
Compared to $X^TX/n$, this matrix contains a bias induced by the
diagonal entries of the matrix $W^TW/n$ whose expectations
$\sigma_j^2$ do not vanish. Therefore, if the $\sigma_j^2$ can be estimated, a natural idea is to compensate
this bias thanks to these estimates. This leads to a
new estimator $\hat\theta^{C}$, called compensated MU selector, and defined as a solution of the
minimisation problem
\begin{equation} \label{C}
\min\{|\theta|_1: \,\, \theta\in\Theta, \,
\big|\fracn Z^T(y-Z\theta)+\widehat{D}\theta\big|_\infty\leq\mu|\theta|_1
+\tau\},
\end{equation}
where $\widehat{D}$ is the diagonal matrix with entries
$\hat{\sigma}_j^2$, which are estimators of $\sigma_j^2$, and
$\mu\geq 0$ and $\tau\geq 0$ are constants chosen according to the level of the noises and the accuracy of the estimators $\hat\sigma_j^2$.

Several aspects of the compensated MU selector are studied in \cite{RT2}, in particular the rates of convergence in $\ell_q$-norm,
the prediction risk and the design of confidence intervals. One of the interests of this modification of the MU selector is that it enables us to obtain bounds for the estimation errors which are decreasing with the sample size $n$. This is in contrast to the case of the MU selector, where the bounds are small
only if the noise $W$ is small. %\footnote{In the appendix, inspired by the estimator designed in Section \ref{sec:conic}, we propose a new conic variant of the MU selector. We establish rates of convergence which can improve upon the MU selector.}
For example, if $\t^*$ is $s$-sparse, it is shown in \cite{RT2} that under appropriate assumptions,
\begin{equation}\label{rt13}
|\hat\theta^{C}-\t^*|_q\le C s^{1/q}\sqrt{\frac{\log p}{n}}(|\t^*|_1+1), \quad  1\le q\le \infty,
\end{equation}
with probability close to 1, where $C>0$ is a constant independent of $s,p,n$, and $\theta^*$.

An alternative Lasso type estimator (non-convex program) complemented by an iterative relaxation procedure is introduced in \cite{LW}.
This method requires the knowledge of the exact value of $|\t^*|_1$ (or of the property $|\t^*|_1\le b\sqrt{s}$ for a constant $b$), and of an upper bound on $|\t^*|_2$. Considering the setting where the entries of the regression matrix $X$ are zero-mean subgaussian, it is shown in \cite{LW} that if $\t^*$ is $s$-sparse, under appropriate assumptions, the resulting estimator $\hat \t'$ satisfies
\begin{equation}\label{lw12}
|\hat \t'-\t^*|_2\le C(\t^*) s^{1/{2}}\sqrt{\frac{\log p}{n}}(|\t^*|_{2}+1),
\end{equation}
with probability close to 1, where $C(\t^*)>0$ depends on $\t^*$ in a non-specified way. Related covariates selection results are reported in  \cite{SFT12}. In \cite{CChen,CChen1}, the authors propose yet another method of estimation of $\t^*$, based on orthogonal matching pursuit (OMP). Their procedure
needs the parameter $s$ (the exact number of non-zero components of $\t^*$) as an input. Moreover, they impose the additional assumption that the non-zero components $\t^*_j$ of $\t^*$ are sufficiently large:
\begin{equation}\label{cc12}
|\t^*_j| \ge c \sqrt{\frac{\log p}{n}}(|\t^*|_2+1), \quad j=1,\dots, p,
\end{equation}
where $c>0$ is a constant. Focusing as in \cite{LW} on the case where the entries of the regression matrix $X$ are zero-mean subgaussian, it is shown in \cite{CChen,CChen1} that the OMP estimator satisfies a bound analogous to (\ref{lw12}) with constant $C(\t^*)\equiv C>0$ independent of $\t^*$, as well as a consistent support recovery property.

These recent developments shed light on errors-in-variables problems in high dimensional settings. However, they are not fully satisfying. Indeed, the following issues are remaining:

\vspace{1mm}

\noindent$\bullet$ From a practical viewpoint, the use of the above estimators can be intricate. In particular, the minimisation problem (\ref{C}) is not always a convex one, and \cite{RT2} does not provide an algorithm enabling to solve it in general case. Although the methods suggested in \cite{LW} and \cite{CChen,CChen1} are computationally feasible, they need the knowledge of the parameters $|\t^*|_1$, $|\t^*|_2$ or $s$, which are not available in practice.

\vspace{1mm}

\noindent$\bullet$ While the bound (\ref{rt13}) is more general than (\ref{lw12}) (it holds for all $q$ and not only for zero-mean subgaussian $X$), it is less accurate than (\ref{lw12}) in the case $q=2$ assuming that (\ref{lw12}) is established with $C(\t^*)\equiv C>0$ independent of $\t^*$. Indeed, $|\t^*|_2$ is always smaller than $|\t^*|_1$. For example, if all components of $\t^*$ take the same value and  $\t^*$ is $s$-sparse, then $|\t^*|_2 = |\t^*|_1/\sqrt{s}$. In fact, the optimal rate of convergence in $\ell_q$-norm on the class of $s$-sparse vectors, as a function of $s,p,n$ and the norms $|\t^*|_r$, remains unknown. When $q=2$ and $X$ is zero-mean Gaussian, a minimax lower bound including the factor $|\t^*|_2$ and not $|\t^*|_1$ is stated without proof in \cite{CChen1}. This, however,
does not answer the question in general situation.

\vspace{1mm}

 The aim of this paper is to provide answers to the above two questions. It is organized as follows. After giving some definitions and assumptions in Sections \ref{sec:assump} and \ref{sec:def}, we introduce in Section \ref{sec:conic} a new estimator $\hat \t$ which is based on second order cone programming and thus can be computed in polynomial time. We show that, under appropriate conditions, this estimator attains bounds of the form
\begin{equation}\label{rt14}
|\hat \t-\t^*|_q\le C s^{1/{q}}\sqrt{\frac{\log p}{n}}(|\t^*|_{2}+1), \quad 1\le q\le \infty,
\end{equation}
with probability close to 1, where the constant $C$ does not depend on $s,p,n$ and $\t^*$.  Contrary to the procedures of \cite{CChen,CChen1} and \cite{LW}, this new estimator does not require the knowledge of $|\t^*|_1$, $|\t^*|_2$ or $s$ to be computed. We also do not need a lower bound condition such as (\ref{cc12}) on the components of the target vector $\t^*$. Another difference from the mentioned papers is that our main results do not focus on zero-mean subgaussian regression matrices $X$, but rather deal with deterministic matrices $X$ commonly appearing in applications. As an easy consequence, we show that the results extend to random matrices $X$ %(non necessarily having zero-mean subgaussian entries)
by using suitable deviation properties for the quantity
\begin{equation*}
m_2=\max_{j=1,\dots,p} \frac1{n}\sum_{i=1}^n X_{ij}^2,
\end{equation*}
where the $X_{ij}$ are the entries of $X$, as well as checking a Restricted Eigenvalue type condition on the matrix $X^TX/n$. This extension is possible under the assumption that $X$ is independent of $\xi$ and $W$. Note that under mild moment conditions, $m_2$ can be suitably controlled with high probability, see Section 5 of the Supplementary Material. Sufficient conditions for Restricted Eigenvalue type conditions are well known in the high-dimensional setting, see \cite{BRT09}.

Furthermore, in Section \ref{sec:lower}, we prove minimax lower bounds  showing that no estimator can achieve faster rate than that given in (\ref{rt14}), up to a logarithmic in $s$ factor, uniformly on a class of $s$-sparse vectors. Finally, Section \ref{sec:simulations} provides simulation results that compare the conic estimator, the compensated MU selector and variants of the Dantzig selector. The results are in accordance with the theoretical findings.

While the conic programming estimator solves a tractable convex minimisation problem, the compensated MU selector is in general a non-convex program. Section \ref{sec:ComputationC} in the Supplementary Material is devoted to address this issue. We show that under mild assumptions, the compensated MU selector can be reduced to convex programming. In fact, when $\Theta=\R^p$ or $\Theta$ is defined by linear constraints, it can even be written as a single linear programming problem, which is of course a computational advantage compared to the estimator based on conic programming. However, the rate of convergence of the compensated MU selector is suboptimal. The proofs are relegated to the appendices and some additional results and technical lemmas are given in the Supplementary Material.

%A drawback of the compensated MU selector is that its numerical computation might be difficult in some models. %Thus, in this work, we introduce a linear programming version of the compensated MU selector (LP-C selector for short). This new estimator can be written as two successive linear programming problems and we show that asymptotically, it essentially shares the same properties as the Compensated MU selector.

\section{Assumptions on the model}\label{sec:assump}
In this section, we introduce the assumptions that will be used below %\ref{sec:conic}
to study the statistical properties of the estimators. Recall that for $\gamma>0$, the random variable $\eta$ is said to be
{\it subgaussian with variance parameter} $\gamma^2$ (or shortly {\it $\gamma$-subgaussian}) if, for all $t\in\mathbb{R}$,
\begin{equation*}
\E[\text{exp}(t\eta)]\leq
\text{exp}(\gamma^2t^2/2).
\end{equation*}
A random vector $\zeta\in \R^p$ is said to be
{\it subgaussian with variance parameter} $\gamma^2$ (or shortly {\it $\gamma$-subgaussian}) if the inner products $(\zeta, v)$ are  $\gamma$-subgaussian for any $v\in \R^p$ with $|v|_2=1$. We shall consider the following assumptions.

\smallskip
{\it
\begin{itemize}
\item[(A1)] The matrix $X$ is deterministic.
\item[(A2)] The elements of the random vector $\xi$ are independent zero-mean subgaussian random variables
with variance parameter~$\sigma^2$.
\item[(A3)] The rows $w_i$, $i=1,\dots,n$, of the noise matrix $W$ are independent zero-mean subgaussian random vectors with variance parameter~$\sigma^2_*$, and $\E(W_{ij}W_{ik})=0$ for all $1\le j <k \le p$. Furthermore, $W$ is independent of $\xi$.

\item[(A4)] There exist statistics $\hat{\sigma}_j^2$ such that for any $\varepsilon> 0$, we have
\begin{equation}\label{pr1}
\mathbb{P}\big[\max_{j=1,\dots,p}|\hat{\sigma}_j^2-\sigma_j^2|\geq
b(\e)\big]\leq \varepsilon,
\end{equation}
where  $b(\varepsilon)= c_b\sqrt{\frac{\log(c_b'p/\varepsilon)}{n}}$ for some constants $c_b>0$ and $c_b'>0$.
\end{itemize}
}
\noindent Assumptions (A1) -- (A3) are quite standard. Note that we do not assume independence of the components of each $w_i$. Examples of sufficient conditions for (A4) in the model with missing data are provided in~\cite{RT2}.\\

\section{Sensitivities}\label{sec:def}

It is well known, see for example \cite{BRT09}, that the performance of Lasso or Dantzig selector type estimators in high-dimensional linear models is determined by specific characteristics of the Gram
matrix
$$
\Psi = \frac1{n}X^TX,
$$
such as the Restricted Eigenvalue constants. We shall need similar characteristics here.  Following  \cite{GT}, we define them in a more general form, so that the required property is a consequence of the Restricted Eigenvalue property whenever the latter is satisfied.
For a vector $\t$ in $\bR^p$, we denote by $\t_J$ the vector in $\bR^p$ that
has the same coordinates as $\t$ on the set of indices $J\subset
\{1,\hdots,p\}$ and zero coordinates on its complement~$J^c$. We
denote by $|J|$ the cardinality of $J$.

For any $u>0$ and any subset $J$ of $\{1,\hdots,p\}$, consider %the sensitivity characteristics
%related to the action of the matrix $\Psi$ on
the cone
$$
C_{J}(u)=\big\{\Delta\in\bR^p:\ |\Delta_{J^c}|_1\le
u|\Delta_{J}|_1 \big\}.
$$
The use of such cones to define the Restricted Eigenvalue constants and other related characteristics of the Gram matrix is standard in the literature on the Lasso and Dantzig selector, starting from \cite{BRT09}. %Here, we follow the paper \cite{GT} and use the sensitivity characteristics.
For $q\in[1,\infty]$ and
an integer $s\in[1,p]$, the paper \cite{GT} defines the {\em $\ell_q$-sensitivity} as
follows:
$$
\kappa_{q}(s,u)=\min_{J: \ |J|\le s} \Big(\min_{\Delta\in
C_J(u):\ |\Delta|_q=1} \left|\Psi\Delta \right|_{\infty}\Big).
$$
In \cite{GT,GT2}, it is shown that meaningful bounds  for various types of errors in sparse linear regression can be obtained
in terms of the sensitivities $\kappa_{q}(s,u)$. In particular,
it is proved in \cite{GT} that the approach based on sensitivities is more general
than that based on the Restricted Eigenvalue or the
Coherence condition. In particular, under those assumptions, $$\kappa_{q}(s,u) \ge c s^{-1/q},$$
for some constant $c>0$, which implies the rate optimal bounds for the errors of Lasso and Dantzig selector estimators as in \cite{BRT09}. For convenience, some properties of $\kappa_{q}(s,u)$ proved in~\cite{GT} are summarized in Section \ref{appendixC} of the Supplementary Material.%\ref{appendixC}.

In addition to $\kappa_{q}(s,u)$, we introduce a {\it prediction sensitivity} as follows:
$$ \kappa_{\rm pr}(s,u) = \min_{J:|J|\leq s} \Big( \min_{\Delta \in C_J(u):|\Psi^{1/2}\Delta|_2=1}|\Psi\Delta|_\infty\Big).$$
The sensitivity $\kappa_{\rm pr}(s,u)$ is useful to establish convergence in the prediction norm with fast rates, see \eqref{Relax11c} in Theorem~\ref{th:conicRelaxed} below. Such rates can be obtained under more general assumptions than rates of convergence in $\ell_q$-norm. A discussion of the case of repeated regressors is given in~\cite{BCW2014}.  The prediction sensitivity is closely related to the identifiability factor defined in \cite{CCK2013}. Lemma~\ref{lem:appC2} in Section \ref{appendixC} of the Supplementary Material shows that $\kappa_{\rm pr}(s,u)>0$ quite generally. Also, $\kappa_{\rm pr}(s,u) \geq \sqrt{\kappa_1(s,u)}$, see Lemma~\ref{lem:appC1} in Section \ref{appendixC} in the Supplementary Material.
%Note that, in contrast to the $\ell_q$-sensitivities, $\kappa_{\rm pr}(s,u)$ can be bounded away from zero even in the presence of repeated regressors.

\section{Estimator based on conic programming}
\label{sec:conic}
In this section, we introduce our conic programming based estimator $\hat\theta$. This estimator is computationally feasible and we provide upper bounds on its estimation and prediction errors. It will be shown in Section \ref{sec:lower} that these bounds cannot be improved in a minimax sense. In what follows, we fix a (small) value $\e>0$.  The probabilities, with which the bounds on the estimation and prediction errors hold will be of the form $1-c\e$ for some $c>0$.

To define the estimator $\hat\theta$, we consider the following minimisation problem:
\begin{eqnarray}\label{conic}
&&{\rm minimise}  \ |\theta|_1+ \lambda t
%\\
%&&
\ \ \ \  {\rm over} \ (\theta,t) \ {\rm such \ that:}
\end{eqnarray}
$$
\theta\in\Theta, \, t \in \R^+,
\big|\fracn Z^T(y-Z\theta)+\widehat{D}\theta\big|_\infty\leq\mu t
+\tau, \ |\theta|_2 \le t.
$$
Here $\lambda$, $\mu$ and $\tau$ are positive tuning constants, and $\Theta$ is a given subset of $\R^p$ characterising the prior knowledge about $\theta$. In the results below,
$\lambda$, $\mu$ and $\tau$  are of the form
$$\lambda \in [1/2, 2], \quad \mu=C\sqrt{\frac{\log(p/\e)}{n}}, \quad \tau=C\sqrt{\frac{\log(p/\e)}{n}},$$
where we denote by $C>0$ constants depending only on $m_2$ and on the constants appearing in Assumptions (A1) -- (A4). More specifically, in the theory we take,
\begin{equation}\label{DefLambdaMu}
\mu=\delta_1'(\e)+\delta_4'(\e)+\delta_5(\e) + b(\e),~~\tau=\delta_2(\e)+\delta_3(\e)
\end{equation}
where $\delta_i(\e)$ and $\delta_i'(\e)$ are defined in Lemmas \ref{lem2} and \ref{lem3a} in Appendix A.%\ref{appendixA}.
%Precise values of $\mu$ and $\tau$ are given in the proofs.

%\noindent
When $\Theta=\R^p$ or $\Theta$ is a subset of $\R^p$ defined by linear constraints, \eqref{conic} is a conic programming problem. Therefore it can be efficiently solved in polynomial time. %In what follows, we take $\lambda=1/2$ (although other choices are possible) and set

%\noindent
Let $(\hat\theta, \hat t)$ be a solution of (\ref{conic}). We take $\hat\theta$ as estimator of $\theta^*$. Under Assumptions (A1) -- (A4), it follows that the feasible set of the minimisation problem (\ref{conic}) is not empty with high probability if $\varepsilon$ is small enough (see Lemma \ref{lem3} in Appendix B).%\ref{appendixB}).

%\noindent
The following theorem, proved in Appendix B, is our main result about the statistical properties of the estimator  $\hat\t$ based on conic programming.

\begin{theorem}\label{th:conic}
Assume (A1)--(A4), and that the true parameter $\t^*$ is $s$-sparse and belongs to $\Th$. Let $\e>0$, $1\le q\le \infty$, and set $\mu$ and $\tau$ as in (\ref{DefLambdaMu}).
Assume also that
\begin{equation}\label{ass_kappa}\kappa_q(s,1+\lambda)\ge cs^{-1/q},
\end{equation}
 for some constant $c>0$ and that
 \begin{equation}\label{hyps}
 s\le c_1(\lambda^{-1}+\lambda)^{-1}\sqrt{n/\log (p/\e)},\end{equation} for some small enough constant $c_1>0$. Then, with probability at least $1-8\e$,
\begin{eqnarray}\label{11a}
|\hat\t-\t^*|_q&\le& C s^{1/q} \sqrt{\frac{\log (c'p/\e)}{n}} (|\theta^*|_2 +1),
\end{eqnarray}
for some constants $C>0$ and $c'>0$ (here and in the sequel we set $s^{1/\infty}=1$).\\

\noindent Under the same assumptions, the prediction risk admits the following bound,
with probability at least $1-8\e$:
\begin{eqnarray}\label{11c}
\fracn\big|X(\hat\t-\t^*)\big|_2^2&\le& C s \frac{\log (c'p/\e)}{n} (|\theta^*|_2 +1)^2\,.
\end{eqnarray}
The constants $C>0$ and $c'>0$ in \eqref{11a} and \eqref{11c} depend only on $m_2$ and on the constants appearing in Assumptions (A1)--(A4).
\end{theorem}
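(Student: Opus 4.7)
The plan is to parallel the analysis of the compensated MU-selector, but with the crucial twist that the auxiliary variable $t$ in \eqref{conic} plays the role of an adaptive upper bound on $|\t|_2$, and the $\lambda t$ penalty forces $\hat t \approx |\hat\t|_2$ at the optimum. I would proceed in four stages: establish feasibility of the pair $(\t^*,\,|\t^*|_2)$, derive a cone-type condition on $\Delta = \hat\t - \t^*$ from optimality, bound $|\Psi\Delta|_\infty$ by subtracting the two constraint inequalities, and close the argument with the sensitivities $\kappa_q(s,1+\lambda)$ and $\kappa_{\mathrm{pr}}(s,1+\lambda)$.

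For feasibility, $y - Z\t^* = \xi - W\t^*$ yields the decomposition
\[
\fracn Z^T(y - Z\t^*) + \widehat D\t^* \;=\; \fracn X^T\xi \;+\; \fracn W^T\xi \;-\; \fracn X^TW\t^* \;+\; \bigl(\widehat D - \fracn W^TW\bigr)\t^*.
\]
The first two summands are linear in $\xi$ and of $\ell_\infty$-norm at most $\tau$ via sub-gaussian tail bounds plus a union bound over coordinates (supplying $\delta_2(\e)+\delta_3(\e)$). The third summand is linear in $W$: its $j$-th coordinate $\fracn\sum_i X_{ij}(w_i,\t^*)$ is a sum of independent $\sigma_*|\t^*|_2$-sub-gaussian variables by (A3), hence of $\ell_\infty$-norm at most $C|\t^*|_2\sqrt{\log(p/\e)/n}$. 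The fourth summand splits as $(\widehat D - D)\t^* + (D - \fracn W^TW)\t^*$; the former is controlled by (A4) at $\le b(\e)|\t^*|_2$, while for the $j$-th entry of the latter one bounds the whole quantity $\fracn\sum_i W_{ij}(w_i,\t^*) - \sigma_j^2\t^*_j$ as a single centered sub-exponential sum with parameter $\sigma_*^2|\t^*|_2$ via Bernstein's inequality. This produces a $|\t^*|_2$-scaled bound rather than a looser $|\t^*|_1$-scaled one; with the choices in \eqref{DefLambdaMu}, $(\t^*,|\t^*|_2)$ lies in the feasible set with probability $\ge 1 - c\e$.

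Optimality then gives $|\hat\t|_1 + \lambda\hat t \le |\t^*|_1 + \lambda|\t^*|_2$. Splitting along $J = \supp(\t^*)$ and using $\hat t \ge |\hat\t|_2 \ge |\t^*|_2 - |\Delta|_2$ produces $|\Delta_{J^c}|_1 \le |\Delta_J|_1 + \lambda|\Delta|_2$, which is the cone-type information needed to invoke \eqref{ass_kappa} with parameter $1+\lambda$. Next, subtracting the constraint at $(\t^*,|\t^*|_2)$ from that at $(\hat\t,\hat t)$ yields
\[
\bigl|(\fracn Z^TZ - \widehat D)\Delta\bigr|_\infty \;\le\; \mu(\hat t + |\t^*|_2) + 2\tau.
\]
Decomposing $\fracn Z^TZ - \widehat D = \Psi + \fracn(X^TW + W^TX) + (\fracn W^TW - \widehat D)$ and using $\ell_\infty/\ell_1$ duality with the same entry-wise concentration tools as in the feasibility step, the noise-matrix contribution acting on $\Delta$ is at most $C\mu|\Delta|_1$. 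Combined with $\lambda\hat t \le |\Delta|_1 + \lambda|\t^*|_2$ (from optimality and the triangle inequality), one obtains
\[
|\Psi\Delta|_\infty \;\le\; C\mu(1+\lambda^{-1})|\Delta|_1 + 2\mu|\t^*|_2 + 2\tau.
\]

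Finally, the cone condition together with \eqref{ass_kappa} at $q=1$ gives $|\Delta|_1 \le Cs\bigl(\mu(1+\lambda^{-1})|\Delta|_1 + \mu|\t^*|_2 + \tau\bigr)$, and the smallness hypothesis \eqref{hyps} ensures $Cs\mu(1+\lambda^{-1}+\lambda) \le 1/2$, so the $|\Delta|_1$ term can be absorbed to yield $|\Delta|_1 \le C's\sqrt{\log(p/\e)/n}(|\t^*|_2+1)$. Feeding this back gives $|\Psi\Delta|_\infty \le C''\sqrt{\log(p/\e)/n}(|\t^*|_2+1)$, and the general-$q$ statement \eqref{11a} follows from $|\Delta|_q \le |\Psi\Delta|_\infty/\kappa_q(s,1+\lambda) \le C''' s^{1/q}|\Psi\Delta|_\infty$. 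The prediction bound \eqref{11c} is obtained identically with $\kappa_{\mathrm{pr}}(s,1+\lambda)$ in place of $\kappa_q(s,1+\lambda)$: $|\Psi^{1/2}\Delta|_2 \le |\Psi\Delta|_\infty/\kappa_{\mathrm{pr}}(s,1+\lambda)$, with the lower bound $\kappa_{\mathrm{pr}} \ge \sqrt{\kappa_1}$ from Lemma \ref{lem:appC1} providing the required control. The main obstacle throughout is preserving the $|\t^*|_2$ (rather than $|\t^*|_1$) scaling of the noise terms in the feasibility step: naively bounding entries of $\fracn W^TW$ and multiplying by $|\t^*|_1$ recovers only the weaker bound \eqref{rt13}, so the quantity $W_{ij}(w_i,\t^*)$ must be treated as a single sub-exponential variable whose parameter depends on $|\t^*|_2$, and Bernstein's inequality applied to it as a whole.
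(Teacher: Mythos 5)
Your overall architecture coincides with the paper's: feasibility of the pair $(\theta^*,|\theta^*|_2)$ with every noise term scaled by $|\theta^*|_2$ rather than $|\theta^*|_1$ (your Bernstein treatment of the $j$-th entry of $(\fracn W^TW - D)\theta^*$ as a single centered sub-exponential sum is a legitimate variant of the paper's Lemma \ref{lem3a}, which instead splits off the diagonal part and bounds it by $\delta_5|\theta^*|_\infty\le\delta_5|\theta^*|_2$); the key inequality $|\Psi\Delta|_\infty\le C\mu(1+\lambda^{-1})|\Delta|_1+C\mu|\theta^*|_2+C\tau$ (your subtraction of the two constraint inequalities is algebraically equivalent to the paper's Lemma \ref{lem5}, both resting on $\lambda\hat t\le|\Delta|_1+\lambda|\theta^*|_2$); and the absorption of the $|\Delta|_1$ term via \eqref{hyps}. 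Your detour through $\kappa_{\rm pr}\ge\sqrt{\kappa_1}$ for \eqref{11c} also works and is equivalent to the paper's direct use of $\fracn|X\Delta|_2^2\le|\fracn X^TX\Delta|_\infty\,|\Delta|_1$, since that inequality is exactly what proves Lemma \ref{lem:appC1}.

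However, your cone step contains a genuine flaw. From $\hat t\ge|\hat\theta|_2\ge|\theta^*|_2-|\Delta|_2$ you obtain $|\Delta_{J^c}|_1\le|\Delta_J|_1+\lambda|\Delta|_2$ and assert this is the cone information needed to invoke \eqref{ass_kappa} with parameter $1+\lambda$. It is not: $|\Delta|_2$ carries mass from $\Delta_{J^c}$, so membership in $C_J(1+\lambda)$ does not follow. The best one can extract from your inequality is $|\Delta|_2\le|\Delta|_1=|\Delta_J|_1+|\Delta_{J^c}|_1$ and then absorption, which requires $\lambda<1$ and only yields $\Delta\in C_J\bigl((1+\lambda)/(1-\lambda)\bigr)$, an opening not controlled by \eqref{ass_kappa}; for $\lambda\ge1$ (the theorem allows $\lambda\in[1/2,2]$ and recommends $\lambda\asymp1$) the absorption fails entirely. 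The repair is the paper's Lemma \ref{lem4}: since $\theta^*$ is supported on $J$, one has $|\theta^*|_2-|\hat\theta|_2\le|\theta^*_J|_2-|\hat\theta_J|_2\le|\Delta_J|_2\le|\Delta_J|_1$, which gives exactly $|\Delta_{J^c}|_1\le(1+\lambda)|\Delta_J|_1$ for every $\lambda>0$; your triangle inequality over the full vector discards precisely the support information that makes the argument go through. A second, smaller point: you invoke ``\eqref{ass_kappa} at $q=1$'', whereas the theorem assumes it only for the given $q$. This is harmless but needs the one-line justification that inside the cone $|\Delta|_1\le(2+\lambda)s^{1-1/q}|\Delta|_q$, so that $\kappa_1(s,1+\lambda)\ge c(2+\lambda)^{-1}s^{-1}$ follows from \eqref{ass_kappa}; this norm comparison is exactly the device the paper uses to absorb directly in the $\ell_q$-norm without passing through $q=1$.
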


%The proof of Theorem \ref{th:conic} is given in Appendix B.%\ref{appendixB}.
%\\

Some remarks are in order here. The results in Theorem \ref{th:conic} highlight the impact of $\lambda$ and suggest that we should set $\lambda \asymp 1$. Theorem \ref{th:conic} is established under the condition $\kappa_q(s,1+\lambda)\ge cs^{-1/q}$, which holds under standard assumptions on the matrix $X$.  For example, it holds simultaneously for all $q$ under the Coherence assumption, see \eqref{k2} in the Supplementary Material. For $1\le q \le 2$ this condition follows from the Restricted Eigenvalue (RE) assumption, see \eqref{k3}, \eqref{k4} in the Supplementary Material. It is shown in \cite{RZ13} that the RE assumption is satisfied with high probability for a large class of random matrices, including random matrices with zero-mean subgaussian rows and non-trivial covariance structure, as well as matrices with
zero-mean independent rows and uniformly bounded entries. This allows us to extend Theorem \ref{th:conic} to random matrices $X$ as follows. Assume that the conditional distribution of $(\xi, W)$ given $X$ is such that (A2)--(A4) are satisfied conditionally on $X$ for some fixed constants $c_b$ and $c'_b$ and all $X\in \Omega$, where $\Omega$ is a given set of $n \times p$ matrices. For example, in the model with missing data considered in the Introduction, this is the case if $\Omega= \{X: \ \max_{j=1,\dots,p} \frac1{n}\sum_{i=1}^n X_{ij}^4 \le m_4\}$ for some finite constant $m_4$, see \cite{RT2} and Section \ref{Sec5sm} in the Supplementary Material. %Let $\Omega$ be the set of deterministic matrices $X$ such that (A2)--(A4) are satisfied for some fixed constants $c_b$ and $c'_b$.
Fix positive constants $\varepsilon, c, \lambda, m_2$ and denote by ${\cal P}$ the class of probability distributions ${\bf P}_X$ on the set of $n \times p$ matrices $X$ such that
\begin{equation}\label{ass_PX}
{\bf P}_X\Big[\kappa_q(s,1+\lambda)\ge cs^{-1/q},    \ \   \max_{j=1,\dots,p} \frac1{n}\sum_{i=1}^n X_{ij}^2 \le m_2, \ \ X\in \Omega \Big] \ge 1-\varepsilon.
\end{equation}
\begin{cor}\label{cor_PX}
Let $X$ be a random matrix with distribution ${\bf P}_X\in {\cal P}$, and let the above assumptions on the conditional distribution of $(\xi, W)$ given $X$ hold.   Assume that the true parameter $\t^* \in \Th$ is $s$-sparse, (\ref{hyps}) holds and set $\mu$ and $\tau$ as in (\ref{DefLambdaMu}). Then,  \eqref{11a} and \eqref{11c} hold with probability at least $1-9\e$.
\end{cor}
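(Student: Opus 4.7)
The plan is a straightforward conditioning argument that reduces the random-design case to the deterministic-design case already handled in Theorem~\ref{th:conic}. First, I would define the ``good event'' for the design:
\begin{equation*}
\mathcal{A}=\Bigl\{\kappa_q(s,1+\lambda)\ge c s^{-1/q},\ \max_{j=1,\dots,p}\tfrac{1}{n}\sum_{i=1}^n X_{ij}^2\le m_2,\ X\in\Omega\Bigr\}.
\end{equation*}
By the definition of the class $\mathcal{P}$ in \eqref{ass_PX}, we have $\mathbf{P}_X(\mathcal{A})\ge 1-\varepsilon$. On $\mathcal{A}$, every deterministic hypothesis used in Theorem~\ref{th:conic} is in force: the sensitivity lower bound \eqref{ass_kappa} holds with the prescribed constant $c$, the quantity $m_2$ is controlled, and $X\in\Omega$ places us in the regime where (A2)--(A4) are valid conditionally on $X$ with the prescribed constants $c_b,c_b'$.

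Next I would invoke Theorem~\ref{th:conic} conditionally on $X$. The key point to verify is that the theorem really applies to fixed matrices $X$ belonging to $\mathcal{A}$, and that the constants $C,c'$ in \eqref{11a} and \eqref{11c} depend only on $m_2$ and the constants from (A1)--(A4) but not on $X$ itself; this is precisely what the last sentence of Theorem~\ref{th:conic} asserts. Hence, for any $X\in\mathcal{A}$, the conditional probability (with respect to $(\xi,W)$) that either of \eqref{11a} or \eqref{11c} fails is at most $8\varepsilon$.

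Finally, I would combine the two steps by the law of total probability. Writing $\mathcal{B}$ for the failure event of the conclusion \eqref{11a}--\eqref{11c}, we have
\begin{equation*}
\mathbf{P}(\mathcal{B})\le \mathbf{P}(\mathcal{A}^c)+\E\bigl[\mathbf{P}(\mathcal{B}\mid X)\,\mathbf{1}_{\mathcal{A}}\bigr]\le \varepsilon + 8\varepsilon=9\varepsilon,
\end{equation*}
which is the desired bound.

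The only delicate point, and the place where I would be most careful in writing out the details, is the uniformity of constants: one must check that throughout the proof of Theorem~\ref{th:conic}, the deviation quantities $\delta_i(\varepsilon),\delta_i'(\varepsilon)$ and the constants $C,c'$ are controlled solely in terms of $m_2$, the subgaussian parameters $\sigma,\sigma_*$, and the constants $c_b,c_b'$ from (A4). Since all of these are fixed by hypothesis and do not vary with $X\in\Omega$, the conditional application is legitimate with the same $\mu,\tau$ as in \eqref{DefLambdaMu}, and the argument goes through.
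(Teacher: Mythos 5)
Your proposal is correct and is precisely the argument the paper intends: condition on the good design event of \eqref{ass_PX} (probability at least $1-\e$), apply Theorem~\ref{th:conic} conditionally on $X$ using the uniformity of the constants in $m_2$ and in the constants of (A2)--(A4) over $X\in\Omega$, and conclude by the law of total probability that the failure probability is at most $\e+8\e=9\e$. Your attention to the uniformity of $\delta_i(\e),\delta_i'(\e),C,c'$ in $X$ is exactly the right delicate point, and it is settled by the last sentence of Theorem~\ref{th:conic} together with the fixed constants $c_b,c_b'$ in the conditional version of (A4); note only that the bound $8\e$ on the conditional failure of \emph{both} \eqref{11a} and \eqref{11c} simultaneously uses the fact that the proof of Theorem~\ref{th:conic} establishes the two bounds on one and the same event, not via a union of two separate $8\e$-events.
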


%Fix positive constants $\varepsilon, c, \lambda, m_2$ and denote by $\cal P$ the class of all probability distributions ${\bf P}_X$ on the set of $n \times p$ matrices $X$ such that
%\begin{equation}\label{ass_PX}
%{\bf P}_X\Big[\kappa_q(s,1+\lambda)\ge cs^{-1/q},    \ \   \max_{j=1,\dots,p} \frac1{n}\sum_{i=1}^n X_{ij}^2 \le m_2 \Big] \ge 1-\varepsilon.
%\end{equation}
%\begin{cor}\label{cor_PX}
%Let the assumptions of Theorem \ref{th:conic} be satisfied except for (A1) and \eqref{ass_kappa}. Let $X$ be a random matrix independent of $(\xi, W)$ such that
%${\bf P}_X\in {\cal P}$. Then, \eqref{11a} and \eqref{11c} hold with probability at least $1-9\e$.
%\end{cor}

Although we do not pursue it here, Theorem \ref{th:conic} implies results on the correct selection of the sparsity pattern via a thresholding procedure, in the same spirit as it is done in~\cite{Lounici}.

Importantly, the bound (\ref{11a}) shows that the conic programming estimator is optimal in a minimax sense. Indeed, we give in Section \ref{sec:lower} lower bounds for estimation errors which are in agreement with the upper bounds in (\ref{11a}). The conic programming  estimator $\hat\t$ achieves this rate
with a computationally feasible procedure and does not need the knowledge of the parameters $|\t^*|_1$, $|\t^*|_2$ or $s$.

Inspection of the proof reveals that if Condition (\ref{hyps}) does not hold, the conclusions of Theorem \ref{th:conic} remain valid provided $|\theta^*|_2$ is replaced by $|\theta^*|_1$ in the bounds, thus leading to results analogous to those for the compensated MU selector. The next theorem formally states that. Note that the assumptions are different and somewhat weaker than in Theorem \ref{th:conic}.  %This suggests a robust performance of the estimator in extreme cases.

\begin{theorem}\label{th:conicRelaxed}
Assume (A1)--(A4), and that the true parameter $\t^*$ is $s$-sparse and belongs to $\Th$. Let $\e>0$, $1\le q\le \infty$, and set $\mu$ and $\tau$ as in (\ref{DefLambdaMu}). Then, with probability at least $1-8\e$,
\begin{eqnarray}\label{Relax11a}
|\hat\t-\t^*|_q&\le& \frac{C}{\kappa_q(s,1+\lambda)} \sqrt{\frac{\log (c'p/\e)}{n}} \{(\lambda+\lambda^{-1})|\theta^*|_1 +1\},
\end{eqnarray}
for some constants $C>0$ and $c'>0$. Under the same assumptions, the prediction risk admits the following bound,
with probability at least $1-8\e$:
\begin{eqnarray}\label{Relax11c}
\fracn\big|X(\hat\t-\t^*)\big|_2^2&\le& \frac{C}{\kappa_{\rm pr}^2(s,1+\lambda)}  \frac{\log (c'p/\e)}{n} \{(\lambda+\lambda^{-1})|\theta^*|_1 +1\}^2\,.
\end{eqnarray}
Furthermore, under no assumption on $X$, with the same probability, we have the following ``slow rate'' bound:
\begin{eqnarray}\label{Relax11d}
\fracn \big|X(\hat\t-\t^*)\big|_2^2&\le& C \sqrt{ \frac{\log (c'p/\e)}{n} }(\lambda^2 +\lambda^{-1})(|\theta^*|_1^2 +|\theta^*|_1).
\end{eqnarray}
The constants $C>0$ and $c'>0$ in \eqref{Relax11a}--\eqref{Relax11d} depend only on $m_2$ and on the constants appearing in Assumptions (A1)--(A4).
\end{theorem}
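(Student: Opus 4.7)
The plan is to follow the architecture of the proof of Theorem \ref{th:conic}, but without invoking the sparsity/sample-size restriction (\ref{hyps}). Since that restriction is precisely what lets one upgrade $|\theta^*|_1$ factors into $|\theta^*|_2$ in the final estimates, its absence means the bounds will naturally involve $|\theta^*|_1$, together with the $(\lambda+\lambda^{-1})$ factors that come from tracking the $\lambda$-dependence through the optimality inequality.

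First I would work on a good event $\Omega_0$ of probability at least $1-8\varepsilon$ on which: (i) by Lemmas \ref{lem2}, \ref{lem3a} and Assumption~(A4), the stochastic fluctuations of $\fracn Z^T(y-Z\theta^*)+\widehat D\theta^*$ are jointly controlled by $\mu|\theta^*|_2+\tau$ with $\mu,\tau$ as in (\ref{DefLambdaMu}), so that $(\theta^*,|\theta^*|_2)$ is feasible for (\ref{conic}); and (ii) every entry of the ``error matrix''
\[
E:=\fracn Z^TZ-\widehat D-\Psi=\fracn X^TW+\fracn W^TX+\bigl(\fracn W^TW-\widehat D\bigr)
\]
is bounded in absolute value by $C\sqrt{\log(c'p/\varepsilon)/n}$, yielding $|E\Delta|_\infty\le C\sqrt{\log(c'p/\varepsilon)/n}\,|\Delta|_1$ for any $\Delta$. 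Both items are consequences of the concentration bounds already assembled in Appendix~A.

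Optimality of $(\hat\theta,\hat t)$ against the feasible point $(\theta^*,|\theta^*|_2)$ then gives the basic inequality
\[
|\hat\theta|_1+\lambda\hat t\le |\theta^*|_1+\lambda|\theta^*|_2\le(1+\lambda)|\theta^*|_1,
\]
from which I extract $|\hat\theta|_1\le(1+\lambda)|\theta^*|_1$, hence $|\Delta|_1\le(2+\lambda)|\theta^*|_1$ for $\Delta:=\hat\theta-\theta^*$, and $\hat t\le(1+\lambda^{-1})|\theta^*|_1$. Subtracting the $\ell_\infty$ constraint written at $\hat\theta$ from the one at $\theta^*$ and substituting $\fracn Z^TZ-\widehat D=\Psi+E$ gives
\[
|\Psi\Delta|_\infty\le\mu(\hat t+|\theta^*|_2)+2\tau+|E\Delta|_\infty\le C\sqrt{\frac{\log(c'p/\varepsilon)}{n}}\bigl\{(\lambda+\lambda^{-1})|\theta^*|_1+1\bigr\}.
\]

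To pass from this $\ell_\infty$ bound on $\Psi\Delta$ to the $\ell_q$ bound (\ref{Relax11a}) and the prediction bound (\ref{Relax11c}), I would combine the basic inequality with the constraint $|\hat\theta|_2\le\hat t$ to get the cone-like inequality $|\Delta_{J^c}|_1\le|\Delta_J|_1+\lambda|\Delta|_2$ for $J=\supp(\theta^*)$, $|J|\le s$, and then run a dichotomy placing $\Delta$ in the cone $C_J(1+\lambda)$ (at least up to a residual absorbed by the right-hand side we already control). Applying the definitions of $\kappa_q(s,1+\lambda)$ and $\kappa_{\rm pr}(s,1+\lambda)$ then delivers (\ref{Relax11a}) and (\ref{Relax11c}). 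The dimension-free slow-rate bound (\ref{Relax11d}) bypasses the cone: one writes $\fracn|X\Delta|_2^2=\Delta^T\Psi\Delta\le|\Delta|_1\,|\Psi\Delta|_\infty$ and multiplies the $\ell_1$ and $\ell_\infty$ bounds already obtained, reproducing $(\lambda^2+\lambda^{-1})(|\theta^*|_1^2+|\theta^*|_1)$ up to absolute constants. The delicate step is the cone dichotomy: since $|\Delta|_2\le|\Delta|_1$ does not contract $\lambda|\Delta|_2$ back into $|\Delta_J|_1$ when $\lambda\ge 1$, one must be careful not to lose the cone constant $1+\lambda$ asserted in the theorem.
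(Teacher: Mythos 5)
Your overall architecture is the paper's: the same good event of probability $1-8\e$, feasibility of $(\theta^*,|\theta^*|_2)$, the basic optimality inequality yielding $|\Delta|_1\le(2+\lambda)|\theta^*|_1$ and $\hat t\le(1+\lambda^{-1})|\theta^*|_1$ for $\Delta=\hat\theta-\theta^*$, an $\ell_\infty$ bound on $\Psi\Delta$ (your error-matrix decomposition is an equivalent repackaging of Lemma \ref{lem5}), and the slow rate via $\fracn|X\Delta|_2^2\le|\Psi\Delta|_\infty|\Delta|_1$, which is verbatim the paper's proof of \eqref{Relax11d}. The genuine gap is the cone membership step, exactly where you yourself flag trouble. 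Your derivation gives only $|\Delta_{J^c}|_1\le|\Delta_J|_1+\lambda|\Delta|_2$ with the \emph{full} $\ell_2$-norm, and no dichotomy repairs this: in the branch $\lambda|\Delta|_2>|\Delta_J|_1$ you obtain $|\Delta_{J^c}|_1\le 2\lambda|\Delta|_2$, which is self-referential (the right-hand side contains $\Delta_{J^c}$) and places $\Delta$ in no cone $C_J(u)$ with a controlled $u$; moreover $\kappa_q(s,u)$ and $\kappa_{\rm pr}(s,u)$ are defined as minima \emph{over the cone}, so there is no additive ``residual absorbed by the right-hand side'': if $\Delta\notin C_J(1+\lambda)$, the inequalities $\kappa_q(s,1+\lambda)|\Delta|_q\le|\Psi\Delta|_\infty$ and $\kappa_{\rm pr}(s,1+\lambda)|\Psi^{1/2}\Delta|_2\le|\Psi\Delta|_\infty$ are simply unavailable, and both \eqref{Relax11a} and \eqref{Relax11c} collapse.

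The missing idea is in the paper's Lemma \ref{lem4}: exploit that $\theta^*$ is supported on $J=\supp(\theta^*)$ when applying the $\ell_2$ triangle inequality, restricting it to the coordinates in $J$. Indeed, $|\theta^*|_2=|\theta^*_J|_2\le|\hat\theta_J|_2+|\Delta_J|_2\le|\hat\theta|_2+|\Delta_J|_2$, so the basic inequality gives
\begin{equation*}
|\Delta_{J^c}|_1\le|\Delta_J|_1+\lambda\bigl(|\theta^*|_2-|\hat\theta|_2\bigr)\le|\Delta_J|_1+\lambda|\Delta_J|_2\le(1+\lambda)|\Delta_J|_1,
\end{equation*}
i.e.\ $\Delta\in C_J(1+\lambda)$ \emph{exactly}, on the whole good event, with no case analysis and no loss in the cone constant. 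With this substitution for your dichotomy, the rest of your argument goes through and coincides with the paper's proof: the $\ell_\infty$ bound combined with the definition of $\kappa_q(s,1+\lambda)$ yields \eqref{Relax11a}, squaring it against $\kappa_{\rm pr}^2(s,1+\lambda)\,\fracn|X\Delta|_2^2\le\fracn|X^TX\Delta|_\infty^2$ yields \eqref{Relax11c}, and \eqref{Relax11d} requires no cone at all.
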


%The proof of Theorem \ref{th:conicRelaxed} is given in Appendix B.%\ref{appendixB}.
%\\

There are three different results in Theorem \ref{th:conicRelaxed}. The bound (\ref{Relax11a}) is based on the $\ell_q$-sensitivity measures without the sparsity condition (\ref{hyps}) and recovers the rates of the compensated MU selector. The second result (\ref{Relax11c}) presents a prediction rate but the prediction sensitivity allows for more general designs. %In particular, in contrast to the $\ell_q$-sensitivities, it can be positive even in the presence of repeated regressors.
Finally, the last result in Theorem \ref{th:conicRelaxed} provides a slow rate of convergence that requires no assumption on the design matrix.

\section{Minimax lower bounds for arbitrary estimators}
\label{sec:lower}

In this section, we show that the rates of convergence
obtained in Theorem \ref{th:conic} are optimal (up to
a logarithmic in $s$ term) in a minimax sense  for all estimators over the intersection of the class of $s$-sparse vectors and the $\ell_2$-sphere, respectively
$$
B_0(s)= \{\t: |\t|_0 \le s\} \ \ \ \mbox{and} \ \ \ S_2(R)= \{\t: |\t|_2 = R\},
$$
where $R>0$. Defining the parameter set as the intersection
$\Theta = B_0(s) \cap S_2(R)$ is motivated by the presence of both $s$ and $|\t^*|_2$ in the upper bounds of Theorem \ref{th:conic}.
Note that considering a deterministic $X$ means that $X$ is a nuisance parameter
of the model.
Thus, in the definition of the minimax risk, one should take the maximum not only over $\Theta$ but also over a
class of possible matrices~$X$. More generally, one can deal with random $X$ and with the maximum over a class of distributions
of $X$. We shall follow this approach with the class of distributions ${\cal P}$ introduced in Section~\ref{sec:conic}. The result of Corollary~\ref{cor_PX} corresponding to \eqref{11a} can be written as
\begin{equation}\label{lower1}
\sup_{{\bf P}_X\in \cal P} \ \sup_{\t\in B_0(s)\cap S_2(R) } \bP_{X,\theta} \Big[|\hat \t-\t|_q\ge C s^{1/q} \sqrt{\frac{\log(c'p/\e)}{n}} (R +1) \Big]\le 9\e,
\end{equation}
where, for $\theta\in \R^p$, we denote by $\mathbb P_{X,\theta}$ the probability
measure of the pair $(y, Z)$ satisfying (\ref{model1}). Our aim now is to prove the reverse inequality to \eqref{lower1} valid for all estimators.  For this purpose, instead of the maximum over all ${\bf P}_X\in \cal P$, it suffices in principle to consider  one particular distribution ${\bf P}_X$.  We choose it to be the distribution of Gaussian matrix $X$ with i.i.d. rows and positive definite covariance matrix. This enables us, in addition, to obtain minimax optimality when restricting the class ${\cal P}$ to one such Gaussian distribution only. With high probability, these matrices satisfy the RE condition, which implies the inequality $\kappa_q(s,1+\lambda)\ge cs^{-1/q}$ under the probability in \eqref{ass_PX} (see  for example \cite{RZ13} for details).
Also, we shall assume that  $\xi$ and $W$ are Gaussian with i.i.d. entries. In summary, we make the following assumption.

{\it
\begin{itemize}
\item[(A5)] The elements of the triplet $(\xi, X, W)$ are jointly independent. The components of $\xi$ and $W$ are i.i.d. Gaussian zero-mean random variables
with positive variances $\sigma^2$ and $\sigma^2_*$ respectively. The rows of $X$ are i.i.d. Gaussian zero-mean random vectors with covariance matrix $\Sigma>0$.
\end{itemize}
}

Denote by $\lambda_{\min}^\Sigma$ and $\lambda_{\max}^\Sigma$ the smallest and largest eigenvalues of $\Sigma$. The next theorem, proved in Appendix C, provides the desired minimax lower bound.

\begin{theorem}\label{th:lower}
Let $p\ge2$, $1\le q\le \infty$, $2\le s\le p$,  and $R>0$. Let Assumption (A5) hold and $s\log(p/s)/n \le  \bar c R^2/(R^2+1)$ for some constant $\bar c>0$. Then there exist constants $c>0$ and $c'>0$, depending only on $q, \sigma^2, \sigma_*^2, \bar c, \lambda_{\min}^\Sigma$ and $\lambda_{\max}^\Sigma$, such that
\begin{equation}\label{eq:th:lower}
\inf_{\hat T} \sup_{\t\in B_0(s)\cap S_2(R) } \bP_{X,\theta} \Big[|\hat T-\t|_q\ge c s^{1/q} \sqrt{\frac{\log(p/s)}{n}} (R+1) \Big]>c',
\end{equation}
where $\displaystyle \inf_{\hat T}$ denotes the infimum over all estimators, and we set $s^{1/\infty}=1$.
\end{theorem}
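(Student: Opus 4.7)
The plan is to apply the standard reduction to multiple hypothesis testing combined with Fano's inequality (cf.\ Chapter~2 of Tsybakov's monograph). The aim is to exhibit a subset $\{\theta^{(1)},\dots,\theta^{(M)}\}\subset B_0(s)\cap S_2(R)$ whose pairwise $\ell_q$-separation matches the target rate while the pairwise Kullback--Leibler divergences between the laws $\bP_{X,\theta^{(i)}}$ are at most $\alpha\log M$ for some small $\alpha<1/8$.

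\textbf{Construction.} By the Varshamov--Gilbert lemma for sparse binary codes, I would pick $\omega^{(1)},\dots,\omega^{(M)}\in\{0,1\}^{p-1}$ of Hamming weight $s-1$ with pairwise Hamming distance at least $(s-1)/8$ and $\log M\ge c_0\,s\log(p/s)$. Embedding into $\R^p$ on the coordinates $\{2,\dots,p\}$, define
$$
\theta^{(i)} = a\,e_1 + \delta\,\omega^{(i)},\qquad \delta = c_1(R+1)\sqrt{\log(p/s)/n},\qquad a = \sqrt{R^2 - \delta^2(s-1)}.
$$
For $c_1$ and $\bar c$ sufficiently small, the assumption $s\log(p/s)/n\le\bar c R^2/(R^2+1)$ ensures $\delta^2(s-1)\le R^2/2$, so that $a\ge R/\sqrt{2}>0$ and $\theta^{(i)}\in B_0(s)\cap S_2(R)$ for each $i$. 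The pairwise separation
$$
|\theta^{(i)}-\theta^{(j)}|_q = \delta\,|\omega^{(i)}-\omega^{(j)}|_q \ge \delta\,(s/8)^{1/q} \ge 2c\,s^{1/q}\sqrt{\log(p/s)/n}\,(R+1)
$$
then holds for a suitable $c$, with the convention $s^{1/\infty}=1$.

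\textbf{KL computation.} Under (A5) the rows $(y_k,Z_k)$ are i.i.d.\ centered Gaussians in $\R^{p+1}$, and the marginal law of $Z$ does not depend on $\theta$. Hence
$$
KL\bigl(\bP_{X,\theta^{(i)}}\,\|\,\bP_{X,\theta^{(j)}}\bigr) = n\,\E_{Z_1}\Bigl[KL\bigl(\bP_{y_1\mid Z_1,\theta^{(i)}}\,\|\,\bP_{y_1\mid Z_1,\theta^{(j)}}\bigr)\Bigr].
$$
Gaussian conditioning gives $y_1\mid Z_1\sim\mathcal{N}(\theta^\top A Z_1,\,V(\theta))$ with $A=\Sigma(\Sigma+\sigma_*^2 I)^{-1}$ and $V(\theta)=\sigma^2+\sigma_*^2\theta^\top A\theta$, and after taking expectation in $Z_1$ the one-row divergence decomposes into a mean part proportional to $(\theta^{(i)}-\theta^{(j)})^\top\Sigma A(\theta^{(i)}-\theta^{(j)})/V(\theta^{(j)})$ plus a variance part depending only on the ratio $V(\theta^{(i)})/V(\theta^{(j)})$. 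Since $V(\theta)$ is bounded below by a constant multiple of $(R+1)^2$ (with the constant depending on $\sigma,\sigma_*,\lambda_{\min}^\Sigma,\lambda_{\max}^\Sigma$) and $|\theta^{(i)}-\theta^{(j)}|_2^2\le 2s\delta^2$, the $n$-fold mean contribution is of order $s\log(p/s)$. For the variance part one shows $|V(\theta^{(i)})-V(\theta^{(j)})|\le C\,R\,\delta\sqrt{s}$, and a Taylor expansion of $r\mapsto\tfrac12(r-1-\log r)$ at $r=1$, together with separate analyses for $R\le 1$ and $R>1$, again gives a contribution of order $s\log(p/s)$. Choosing $c_1,\bar c$ small enough thus yields $KL\le\alpha\log M$ with $\alpha<1/8$, and Fano's inequality delivers the claimed constant lower bound.

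\textbf{Main obstacle.} The variance term of the KL is the most delicate ingredient. When $\Sigma\propto I$, the quadratic form $\theta^\top A\theta$ is constant on $S_2(R)$ and this term vanishes, reducing the problem to a clean Gaussian mean-shift calculation. For a general $\Sigma>0$, however, $V(\theta^{(i)})$ fluctuates with $i$, and one must carefully exploit the assumption $s\log(p/s)/n\le\bar c R^2/(R^2+1)$ to see that the relative variance difference $(V(\theta^{(i)})-V(\theta^{(j)}))/V(\theta^{(j)})$ is itself of order $\sqrt{s\log(p/s)/n}$; only then does its squared contribution accumulated across $n$ rows still match $s\log(p/s)$, i.e.\ $\log M$ up to a constant controllable by $c_1$ and $\bar c$.
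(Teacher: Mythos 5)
Your proposal is correct and follows the same overall architecture as the paper's proof: a Varshamov--Gilbert sparse code placed on coordinates $\{2,\dots,p\}$, a spike on the first coordinate used to pin the $\ell_2$-norm at exactly $R$ (the paper rescales the whole vector by $R/\sqrt{1+\gamma^2(s-1)}$ instead of adjusting the spike, which is equivalent here since all codewords have the same Hamming weight), the observation that the law of $Z$ is free of $\theta$ so that only the conditional Gaussian law $y_1\mid Z_1\sim\mathcal{N}(\theta^T\Gamma Z_1,\,\sigma^2+\sigma_*^2\,\theta^T\Gamma\theta)$ with $\Gamma=\Sigma(\Sigma+\sigma_*^2I_{p\times p})^{-1}$ matters (your $A$ equals the paper's $\Gamma$, and your $V(\theta)$ equals $\sigma^2+c_\theta\sigma_*^2$), and a Fano-type conclusion (you bound all pairwise divergences, the paper bounds KL to the single reference $\bar\omega_0$ via Theorem 2.7 of Tsybakov; both suffice). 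The genuinely different ingredient is your handling of the variance part of the KL. The paper's Lemma~\ref{lem:kullback} bounds it \emph{linearly} by $c\,n\,|c_{\theta_1}-c_{\theta_2}|/(1+R^2)$, and in the main proof the cross term $2\gamma R^2\,\omega_0^T\Gamma\,\omega_j/(1+\gamma^2(s-1))$ does not appear in the displayed expansion of $c_{\bar\omega_j}-c_{\bar\omega_0}$; for non-diagonal $\Sigma$ this term is of order $\gamma R^2\sqrt{s}$, and pushed through the linear bound it contributes of order $\sqrt{n\,s\log(p/s)}\,R/\sqrt{1+R^2}$, which can exceed $s\log(p/s)$ exactly in the regime $s\log(p/s)\lesssim nR^2/(1+R^2)$ permitted by the theorem (for diagonal $\Sigma$ the cross term vanishes by disjointness of supports and the paper's argument is complete as written). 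Your second-order treatment --- expanding $r\mapsto\frac12(r-1-\log r)$ and showing the \emph{relative} variance fluctuation is $O(\sqrt{s\log(p/s)/n})$, so that $n$ rows accumulate only $O(s\log(p/s))$ --- absorbs this cross term for arbitrary $\Sigma>0$. So your ``main obstacle'' paragraph correctly identifies, and repairs, the loosest point of the paper's write-up; what the paper's route buys in exchange is a short, reusable KL lemma and a one-line application in the main proof.
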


%The proof of Theorem \ref{th:lower} is given in Appendix D.

\section{Monte Carlo study}\label{sec:simulations}

In this section, we briefly illustrate the empirical performance of the estimators discussed above.
We consider the proposed conic programming estimator with $\lambda=0.5, \ 0.75 \mbox{and} \  1$ (denoted as Conic ($\lambda$)) and the Compensated MU selector (CompMU). To have benchmarks, we also compute the (unfeasible) Dantzig selector which knows $X$ (Dantzig X), and the Dantzig selector that uses only $Z$ (Dantzig Z), ignoring the errors-in-variables issue.

The simulation study uses the following data generating process
$$ y_i = x_i^T\theta^* + \xi_i, \ \ \ \ z_i = x_i + w_i.$$
Here, $\xi_i, w_i, x_i$ are independent and $\xi_i \sim {\cal N}(0,\sigma^2)$, $w_{i}\sim {\cal N}(0,\sigma_*^2I_{p\times p})$, $x_i\sim {\cal N}(0,\Sigma)$ where $I_{p\times p}$ is the identity matrix and $\Sigma$ is a $p\times p$ matrix with elements $\Sigma_{ij}=\rho^{|i-j|}$.
We set $\sigma = 0.128$, $\sigma_*=0.45$, and $\rho=0.25$.
For simplicity, we assume that $\sigma_*$ and $\sigma$ are known and we set $\hat D = D = \sigma_*^2I_{p\times p}$. The penalty parameters are set as $\tau = \mu = \sigma\sqrt{\log(p/\varepsilon)/n}$ for $\varepsilon = 0.05$.
 We consider two choices for the vector of unknown parameters $\theta^*$. The first choice is $\theta^* = (1,1,1,1,1,0,\ldots,0)^T $, which captures the case where the coefficients are well separated from zero. The second choice is $\theta^* = (1,1/2,1/3,1/4,1/5,0,\ldots,0)^T $, which represents the situation where $\theta^*$ is sparse with components that are not necessarily well separated from zero. The results are based on 100 replications. The implementation of these estimators was based on interior point methods which might not be suitable for large instances. The average running times of the six estimators were within a factor of two. For $p=10, 50$ the average running time of all six estimators was below one second, for $p=100$ all estimators were below 3 seconds and for $p=500$ the average running times were between 40 and 100 seconds. We note that Conic(1) and CompMU had very comparable running times, both DantzigX and DantzigZ had faster running times than the other estimators, while Conic(0.5) and Conic(0.75) requires slightly more time to be computed.

\begin{table}[h!]
\begin{center}
 \begin{tabular}{|l|ccc|ccc|ccc}
 \hline
First $\theta^*$ & \multicolumn{3}{|c|}{$n=300$ and $p=10$} &  \multicolumn{3}{|c|}{$n=300$ and $p=50$} \\
  \hline
   Method & Bias & RMSE  &  PR      & Bias  & RMSE  &  PR   \\
  \hline
  Conic (0.5)  & 0.0838151   & 0.1846383 &     0.1710643  & 0.0955776   & 0.2245046 &     0.2170111\\
  Conic (0.75)  &  0.0838151   & 0.1846383 &     0.1710643 & 0.0953689   & 0.2250219 &     0.2176691\\
  Conic (1) & 0.0838151   & 0.1846383 &     0.1710643 &  0.0956858   & 0.2253614 &     0.2180705 \\
  CompMU &  0.1566904   & 0.2191588      & 0.2225818 & 0.1840462   & 0.2362394      & 0.2507162 \\
  \hline
  Dantzig X & 0.0265486    &  0.0321528      & 0.0349530 & 0.0301636    &  0.0349420      & 0.0386731\\
  Dantzig Z &  0.2952845    &  0.3300527      & 0.3645317  &0.3078976    &  0.4166192      & 0.4174840 \\
  \hline
 \end{tabular}

~\\

%\hspace{0.5cm}

%~\\

\begin{tabular}{|l|ccc|ccc|ccc}
 \hline
First $\theta^*$ &  \multicolumn{3}{|c|}{$n=300$ and $p=100$} & \multicolumn{3}{|c|}{$n=300$ and $p=500$}\\
  \hline
   Method   & Bias  & RMSE  &  PR     & Bias & RMSE  &  PR\\
  \hline
  Conic (0.5)  &  0.1101178   & 0.2556778 &     0.2474407  & 0.1668239   & 0.2656095 &     0.263529 \\
  Conic (0.75)  &  0.0943678   & 0.2711839 &     0.2606997  &  0.1425789   & 0.2846916 &     0.2789745  \\
  Conic (1)  & 0.0942906   & 0.2734750 &     0.2631424  &  0.1276741   & 0.3121221 &     0.3093194 \\
  CompMU &  0.1910509   & 0.2539411      & 0.2658907 &  0.2052520   & 0.2657204      & 0.2772154  \\
  \hline
  Dantzig X &  0.0317776    &  0.0366155      & 0.0403419 &  0.0352309    &  0.0403134      & 0.0448000\\
  Dantzig Z &  0.3081669    &  0.4994041      & 0.4652972 & 0.3536668    &  0.6865989      & 0.5921541   \\
\hline
  \end{tabular}
  \caption{\footnotesize Simulation results for the first choice of $\theta^*$. For each estimator we provide average bias (Bias), average root-mean squared error (RMSE), and average prediction risk (PR).}\label{Table:MC}
\end{center}
\end{table}

Table \ref{Table:MC} reports the simulation results in the case $\theta^* = (1,1,1,1,1,0,\ldots,0)^T $. As expected, the performance of all the estimators deteriorates as $p$ grows but only slightly. Also, the (unfeasible) estimator based on Dantzig selector that observes $X$ outperforms all feasible options. The estimator that ignores the errors-in-variables issue appears with a higher bias leading to the worse result in terms of root mean square error and empirical risk. The performance of the feasible estimators discussed in this paper is between these two benchmarks. The three conic estimators exhibit a better result than the compensated MU selector when $p=10, 50$. For the larger dimensions $p=100, 500$, their performance becomes similar to that of the compensated MU selector. Nonetheless, the conic estimator with $\lambda =0.5$ is slightly better than all the other feasible estimators.
We also note that for the small dimension $p=10$, all three conic estimators give the same results. The reason is that the conic constraint was not active in the simulations for $p=10$ so that the estimators were the same for the range of $\lambda$ under consideration. This was not the case for $p=50,100,500$. These findings are very much aligned with the theoretical properties of each estimator and sustain that the impact of errors-in-variables can be substantial.

Table \ref{Table:MC2} reports the results for the second choice of $\theta^*$, where the coefficients are not well separated from zero. They are qualitatively the same as before. The results confirms the robustness of the conclusions with respect to possible model selection errors which are unavoidable when the coefficients are not well separated from zero.

\begin{table}[h!]
\begin{center}
 \begin{tabular}{|l|ccc|ccc|ccc}
 \hline
Second $\theta^*$ & \multicolumn{3}{|c|}{$n=300$ and $p=10$} &  \multicolumn{3}{|c|}{$n=300$ and $p=50$} \\
  \hline
   Method & Bias & RMSE  &  PR      & Bias  & RMSE  &  PR   \\
  \hline
  Conic (0.5)  &   0.0564816   & 0.1020763 &     0.0987642  &  0.0684162   & 0.1236275 &     0.1223037\\
  Conic (0.75) & 0.0564816   & 0.1020763 &     0.0987642 &  0.0682720   & 0.1229000 &     0.1219398 \\
  Conic (1) &  0.0564816   & 0.1020763 &     0.0987642 &  0.0683291   & 0.1227749 &     0.1218898 \\
  CompMU &  0.0839431   & 0.1171633      & 0.1204765 &   0.1007774   & 0.1318303      & 0.1396494\\
  \hline
  Dantzig X &  0.0265486    &  0.0321528      & 0.0349530 &  0.0301636    &  0.0349420      & 0.0386731 \\
  Dantzig Z &  0.1885828    &  0.2024266      & 0.2138763 & 0.1949159    &  0.2314964      & 0.2319208 \\
  \hline
 \end{tabular}
%~\\
%
%\hspace{0.5cm}

~\\

\begin{tabular}{|l|ccc|ccc|ccc}
 \hline
Second $\theta^*$  &  \multicolumn{3}{|c|}{$n=300$ and $p=100$} & \multicolumn{3}{|c|}{$n=300$ and $p=500$}\\
  \hline
   Method   & Bias  & RMSE  &  PR     & Bias & RMSE  &  PR\\
  \hline
  Conic (0.5)  &  0.0714621   & 0.1374637 &     0.1349551 & 0.0945558   & 0.1472914 &     0.1477198 \\
  Conic (0.75)  &  0.0713670   & 0.1378301 &     0.1353203 &  0.0824510   & 0.1589884 &     0.1565416 \\
  Conic (1)  & 0.0716242   & 0.1381810 &     0.1357000 & 0.0783823   & 0.1682841 &     0.1658849 \\
  CompMU & 0.1063728   & 0.1405472      & 0.1479579  & 0.1131005 & 0.1477336      & 0.1545960 \\
  \hline
  Dantzig X &  0.0317776    &  0.0366155      & 0.0403419  & 0.0352309    &  0.0403134      & 0.0448000 \\
  Dantzig Z & 0.1978958    &  0.2536633      & 0.2432222 & 0.2152972    &  0.3145349      & 0.2766815 \\
\hline
  \end{tabular}
  \caption{\footnotesize Simulation results for the second choice of $\theta^*$. For each estimator we provide average bias (Bias), average root-mean squared error (RMSE), and average prediction risk (PR).}\label{Table:MC2}
\end{center}
\end{table}

%\section{Conclusion}\label{sec:conclu}
%\noindent We have studied two estimation methods for high-dimensional linear regression with errors in variables: the compensated MU selector of \cite{RT2} and the conic programming based estimator. These two procedures are at the same time computationally feasible, realisable in practice (they do not use the knowledge of unaccessible characteristics of the target $\t^*$) and reasonable in terms of theoretical performances. Namely, the conic programming estimator is rate optimal in a minimax sense, while the compensated MU selector admits somewhat less accurate bounds, with $|\t^*|_1$ in place of $|\t^*|_2$. Nevertheless, from an algorithmic viewpoint, this last estimator is simpler since, in the cases of major interest, its numerical computation can be reduced to linear programming.\\

\noindent {\bf Acknowledgement}
We would like to thank
Anatoli Juditsky for helpful remarks.
This work is supported by GENES and by the French National Research Agency (ANR) as part of Idex Grant ANR -11- IDEX-0003-02, Labex ECODEC (ANR - 11-LABEX-0047), and IPANEMA grant (ANR-13-BSH1-0004-02), and by the ``Chaire Economie et Gestion des Nouvelles Donn\'ees", under the
auspices of Institut Louis Bachelier, Havas-Media and Paris-Dauphine.

\section*{Appendix A. Bounds on stochastic error terms}\label{appendixA}

In this appendix, we give upper bounds on the stochastic error terms appearing in the main results. In what follows, $D$ is the diagonal matrix with diagonal elements
$\sigma_j^2$, $j=1,\dots,p$, and for a square matrix $A$, we denote
by $\text{Diag}\{A\}$ the matrix with the same dimensions as $A$,
the same diagonal elements as $A$ and all off-diagonal elements
equal to zero. The following lemma is proved in \cite{RT2}.
%In what follows, $|\cdot|_q$ denotes the $\ell_q$-norm, $1\le q\le\infty$.

\begin{lemma}\label{lem2} Let $0<\e<1$ and assume (A1)-(A3). Then, with probability at least $1-\varepsilon$,
\begin{eqnarray*}
&&\big|\fracn X^TW\big|_\infty \le \delta_1(\varepsilon),\quad \big| \fracn X^T\xi\big|_\infty \le \delta_2(\varepsilon),\quad \big| \fracn W^T\xi\big|_\infty \le \delta_3(\varepsilon),\\
&& \big|\fracn(W^TW-\text{\rm Diag}\{W^TW\})\big|_\infty \le \delta_4(\varepsilon),\quad\big|\fracn \text{\rm Diag}\{W^TW\}-D\big|_\infty \le \delta_5(\varepsilon),
\end{eqnarray*}
where $\delta_1(\varepsilon)=\tilde \delta(\varepsilon,\sigma_*,2p^2)$, $\delta_2(\varepsilon)=\tilde \delta(\varepsilon,\sigma,2p),$ $\delta_3(\varepsilon)=\delta_5(\varepsilon)=\bar \delta
(\varepsilon,2p)$, $\delta_4(\varepsilon)=\bar \delta
(\varepsilon,p(p-1))$
%\begin{eqnarray*}
%&&
%\delta_1(\varepsilon)= \sigma_*
%\sqrt{\frac{2m_2\log(2p^2/\varepsilon)}{n}}, \quad \delta_2(\varepsilon)=\sigma
%\sqrt{\frac{2m_2\log(2p/\varepsilon)}{n}},\\
%&& \delta_3(\varepsilon)=\delta_5(\varepsilon)=\bar \delta
%(\varepsilon,2p), \quad \delta_4(\varepsilon)=\bar \delta
%(\varepsilon,p(p-1)),
%\end{eqnarray*}
and for an integer $N$,
$$
\tilde \delta(\varepsilon,a,N) = a
\sqrt{\frac{2m_2\log(N/\varepsilon)}{n}}, \quad \bar \delta (\varepsilon,N) = \max\left(\gamma_0
\sqrt{\frac{2\log(N/\varepsilon)}{n}},\
\frac{2\log(N/\varepsilon)}{t_0n}\right),
$$
with $\gamma_0, t_0$ are positive constants depending only on $\sigma$ and $\sigma_*$.
\end{lemma}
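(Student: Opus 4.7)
The plan is to handle the five inequalities in turn, in each case reducing to a standard concentration inequality for a sum of independent scalar random variables and then paying a union-bound factor equal to the number of indices over which we are taking a maximum. The split is natural: bounds on $\delta_1$ and $\delta_2$ involve sums linear in the noise, so they fall under a subgaussian tail (giving the $\tilde\delta$ form), while $\delta_3$, $\delta_4$, $\delta_5$ involve sums of products of noises, hence are sub-exponential and give the two-regime $\bar\delta$ form.

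First, for the linear terms, fix indices $(j,k)$ and write $(X^TW)_{jk}/n = \frac{1}{n}\sum_i X_{ij}W_{ik}$. By Assumption (A1) the $X_{ij}$ are constants; by (A3) the rows of $W$ are independent, so the summands are independent. Since $w_i$ is $\sigma_*$-subgaussian, the coordinate $W_{ik} = \langle w_i, e_k\rangle$ is a scalar $\sigma_*$-subgaussian, and the linear combination $\frac{1}{n}\sum_i X_{ij}W_{ik}$ is subgaussian with variance parameter at most $\sigma_*^2\,\frac{1}{n^2}\sum_i X_{ij}^2 \le \sigma_*^2 m_2/n$. A Hoeffding-type tail bound followed by a union bound over the $p^2$ entries (accounting for absolute values) yields the stated $\delta_1(\e)$. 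The argument for $\delta_2$ is identical with $\xi$ in place of a column of $W$, the $2p$ coming from the $p$ coordinates times the two signs.

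Next, for the quadratic terms, the key tool is that a product of two subgaussians is sub-exponential. For $\delta_3$, write $(W^T\xi)_j/n = \frac{1}{n}\sum_i W_{ij}\xi_i$: by (A3) the rows of $W$ are independent of each other and (after conditioning) of $\xi$, so each $W_{ij}\xi_i$ is a centered sub-exponential with parameters controlled by $\sigma\sigma_*$, and Bernstein's inequality produces the mixed subgaussian/sub-exponential tail in $\bar\delta$. For $\delta_5$, the terms $W_{ij}^2 - \E W_{ij}^2$ are centered sub-exponentials (since $W_{ij}$ is subgaussian), independent across $i$, and Bernstein applies again. For $\delta_4$, the slightly more delicate point is that within a row $w_i$ the coordinates $W_{ij}, W_{ik}$ need not be independent; however (A3) gives $\E(W_{ij}W_{ik})=0$, and the polarization identity
\[
W_{ij}W_{ik} = \tfrac14\bigl[(W_{ij}+W_{ik})^2 - (W_{ij}-W_{ik})^2\bigr]
\]
together with the fact that $W_{ij}\pm W_{ik} = \langle w_i, e_j\pm e_k\rangle$ are still $\sigma_*\sqrt{2}$-subgaussian shows that $W_{ij}W_{ik}$ is centered sub-exponential. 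Independence across $i$ then lets Bernstein deliver the bound, and a union bound over the $p(p-1)$ off-diagonal pairs closes the argument.

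The main obstacle is the $\delta_4$ bound, precisely because the joint subgaussianity of the row $w_i$ is weaker than entrywise independence; one must extract from the vector-subgaussian hypothesis and the zero-covariance condition a clean sub-exponential Bernstein parameter for $W_{ij}W_{ik}$, and then verify that the sub-exponential parameter depends only on $\sigma_*$ so that the constants $\gamma_0$ and $t_0$ in $\bar\delta$ can be taken independent of $j,k$. Once this is settled the union bounds and the explicit formulas for $\tilde\delta$ and $\bar\delta$ follow by collecting the five events and noting that each was established with probability at least $1-\e/5$ (or by re-parameterizing $\e$ absorbed into the constants $c_b, c_b'$).
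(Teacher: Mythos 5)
Your proposal is correct and follows essentially the same route as the source: the paper itself defers this lemma to \cite{RT2}, but both that proof and the paper's in-house proof of the companion Lemma \ref{lem3a} rest on exactly the toolkit you use --- subgaussian MGF bounds for the linear terms $\delta_1,\delta_2$, the subgaussian-implies-sub-exponential fact plus Bernstein-type two-regime tails for $\delta_3,\delta_4,\delta_5$, and union bounds matching the stated cardinalities $2p^2$, $2p$, $p(p-1)$. Your polarization identity for $W_{ij}W_{ik}$ is equivalent to the paper's device of bounding $2xy\le x^2+y^2$ and applying Cauchy--Schwarz to the moment generating function (as done for $\eta'_j$ in the proof of Lemma \ref{lem3a}), correctly exploiting row subgaussianity and $\E(W_{ij}W_{ik})=0$ in place of entrywise independence, and your closing remark rightly flags the only loose end, the $\e$ versus $\e/5$ bookkeeping when the five events are stated jointly.
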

\noindent We now give the second lemma.
\begin{lemma}\label{lem3a}
Let $0<\e<1$, $\theta^*\in \R^p$ and assume (A1)-(A3). Then, with probability at least $1-\varepsilon$,
\begin{align}
&\big|\fracn X^TW\theta^*\big|_{\infty} \leq \delta_1'(\e)|\theta^*|_2,\label{lem3a1}
\end{align}
where $\delta_1'(\e)= \sigma_{*} \sqrt{\frac{2m_2\log(2p/\e)}{n}}$.
In addition, %$(\log (2p/\e))/n \le 1$ then,
with probability at least $1-\varepsilon$,
\begin{align}
&\big|\fracn (W^TW-{\rm Diag}\{W^TW\})\theta^*\big|_{\infty}\leq\delta_4'(\e)|\theta^*|_2,\label{lem3a2}
\end{align}
where %$\delta_1'(\e)= \sigma_{*} \sqrt{\frac{2m_2\log(2p/\e)}{n}}$ and
$$
\delta_4'(\e) =\max\left(\gamma_2
\sqrt{\frac{2\log(2p/\varepsilon)}{n}},\
\frac{2\log(2p/\varepsilon)}{t_2n}\right),
$$
and $\gamma_2, t_2$ are positive constants depending only on $\sigma_*$.
\end{lemma}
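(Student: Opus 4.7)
The plan is to prove each bound by a coordinate-wise concentration inequality combined with a union bound over $j=1,\dots,p$.

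For \eqref{lem3a1}, fix $j$ and write the $j$-th coordinate as $Z_j := \fracn \sum_{i=1}^n X_{ij}\langle w_i,\theta^*\rangle$. Under (A3) each row $w_i$ is $\sigma_*$-subgaussian as a vector, so the scalar $\langle w_i,\theta^*\rangle$ is centered and $\sigma_*|\theta^*|_2$-subgaussian, and the $w_i$'s are independent across $i$. Hence $Z_j$ is centered subgaussian with variance parameter at most $\frac{\sigma_*^2|\theta^*|_2^2}{n^2}\sum_i X_{ij}^2 \le \frac{m_2\sigma_*^2|\theta^*|_2^2}{n}$. Applying the standard subgaussian tail bound and a union bound over $j$ with $t=\delta_1'(\e)|\theta^*|_2 = \sigma_*|\theta^*|_2\sqrt{2m_2\log(2p/\e)/n}$ yields \eqref{lem3a1}.

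For \eqref{lem3a2}, fix $j$ and let $\tilde\theta\in\R^p$ agree with $\theta^*$ except that $\tilde\theta_j=0$; then $|\tilde\theta|_2\le|\theta^*|_2$. The $j$-th entry of $\fracn(W^TW-\mathrm{Diag}\{W^TW\})\theta^*$ equals
$$ S_j := \fracn \sum_{i=1}^n \sum_{k\ne j} W_{ij}W_{ik}\theta^*_k = \fracn \sum_{i=1}^n W_{ij}\langle w_i,\tilde\theta\rangle. $$
The summands are independent across $i$ (independent rows) and centered (since $\E(W_{ij}W_{ik})=0$ for $k\ne j$ by (A3)). Each summand is the product of two subgaussian scalars, $W_{ij}$ with variance parameter $\sigma_*^2$ and $\langle w_i,\tilde\theta\rangle$ with variance parameter at most $\sigma_*^2|\theta^*|_2^2$, extracted from the same jointly $\sigma_*$-subgaussian row $w_i$; hence it is subexponential with $\psi_1$-norm bounded by a constant multiple of $\sigma_*^2|\theta^*|_2$. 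Bernstein's inequality for sums of independent centered subexponential variables then gives
$$ \bP(|S_j|\ge t)\le 2\exp\bigl(-c\, n \min\{t^2/(\sigma_*^4|\theta^*|_2^2),\, t/(\sigma_*^2|\theta^*|_2)\}\bigr) $$
for some absolute $c>0$. Inverting each of the two regimes and applying a union bound over $j\in\{1,\dots,p\}$ produces \eqref{lem3a2} with the claimed $\delta_4'(\e)|\theta^*|_2$ threshold, the $\max$ in the definition of $\delta_4'(\e)$ reflecting the crossover between the Gaussian and exponential tails of Bernstein.

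The main obstacle is the quadratic-type quantity in \eqref{lem3a2}: the two factors $W_{ij}$ and $\langle w_i,\tilde\theta\rangle$ come from the same row, so independence within a product is unavailable. What saves us is the \emph{joint} (vector) subgaussian structure of $w_i$ built into (A3), which delivers the correct $\psi_1$-bound on the product via Cauchy--Schwarz in Orlicz norms, together with the diagonal removal and the uncorrelatedness clause of (A3) that jointly enforce the zero-mean property. Once these are in place, Bernstein's inequality and the union bound are routine, and the factorisation $|\theta^*|_2$ out of the threshold is automatic from the homogeneity of $\tilde\theta\mapsto \langle w_i,\tilde\theta\rangle$.
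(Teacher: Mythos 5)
Your proof is correct and follows essentially the same route as the paper: both reduce each bound to a coordinate-wise tail estimate plus a union bound over $j=1,\dots,p$, using the vector subgaussianity of the rows $w_i$ from (A3) to get a subgaussian tail with parameter $\sigma_*\sqrt{m_2/n}\,|\theta^*|_2$ for \eqref{lem3a1}, and a sub-exponential two-regime tail for \eqref{lem3a2}, with the zero-mean property of $W_{ij}\sum_{k\ne j}W_{ik}\theta^*_k$ secured by the uncorrelatedness clause of (A3). The only difference is in implementation of the sub-exponential step: you certify each summand via the Orlicz-norm product bound $\|XY\|_{\psi_1}\le\|X\|_{\psi_2}\|Y\|_{\psi_2}$ and then invoke Bernstein's inequality as a black box, whereas the paper derives the same $(\gamma_2/\sqrt{n},\,t_2 n)$-sub-exponential moment generating function bound by hand, using $2ab\le a^2+b^2$, Cauchy--Schwarz on expectations, and the fact that squares of subgaussian variables are sub-exponential (Lemma 5.14 in \cite{vershynin}) -- which is precisely the standard proof of the tools you cite.
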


\begin{proof}  If $\theta^*=0$, the result is obvious. So we assume that $\theta^*\ne 0$. Let $v=\theta^*/|\theta^*|_2$. We can write
\begin{align}
&
\big|\fracn X^TW\theta^*\big|_{\infty} = |\theta^*|_2 \max_{j=1,\dots,p}\left|\frac{1}{n}\sum_{i=1}^n X_{ij}(w_i,v)\right|,
\label{18a}
\end{align}
where $(w_i,v)=\sum_{k=1}^pW_{ik}v_k$ and we denote by $W_{ik}$ and $v_k$ the elements of the matrix $W$ and the vector $v$ respectively. By Assumption (A3), the random variable $(w_i,v)$ is subgaussian with variance parameter $\sigma_*^2$. Using this together with the independence of the $w_i$ for different $i$, we get that for all $t\in \R$,
{\small \begin{align*}
\E \Big[\exp \Big(\frac{t}{n}\sum_{i=1}^n X_{ij}(w_i,v)\Big)\Big]&=\prod_{i=1}^n\E \Big[\exp \Big(\frac{t}{n} X_{ij}(w_i,v)\Big)\Big] \le \prod_{i=1}^n \exp \Big(\frac{\sigma_*^2t^2X_{ij}^2}{2n^2}\Big)\leq \exp \Big(\frac{\sigma_*^2m_2t^2}{2n}\Big).
\end{align*} }
Thus, the random variable $$\eta_j=\frac{1}{n}\sum_{i=1}^n X_{ij}(w_i,v)$$ is $\gamma_1$-subgaussian with $\gamma_1=\sigma_*\sqrt{m_2/n}$. This implies the classical tail bound $$\mathbb{P}[|\eta_j|\ge \delta] \le 2 \exp \big(-\delta^2/(2\gamma_1^2)\big),$$ for any $\delta>0$. This together with (\ref{18a}) and the union bound yields (\ref{lem3a1}).\\

\noindent To prove (\ref{lem3a2}),
we shall use the following fact (see for example Lemma 5.14 in \cite{vershynin}): If $\eta$ is a subgaussian random variable with variance parameter $\gamma^2$, then $\eta^2$ is sub-exponential, that is there exist  constants $\gamma_0=\gamma_0(\gamma)$ and $t_0=t_0(\gamma)$ such that
\begin{equation}\label{subexp}
\E[\text{exp}(t\eta^2)]\leq
\text{exp}(\gamma_0^2t^2/2), \quad |t|\le t_0.
\end{equation}
Analogously to (\ref{18a}), we obtain
\begin{align}
&
\big|\fracn(W^TW-{\rm Diag}\{W^TW\})\theta^*\big|_{\infty} = |\theta^*|_2 \max_{j=1,\dots,p}|\eta'_j|,
\label{18b}
\end{align}
where $$\eta'_j=\frac{1}{n}\sum_{i=1}^n W_{ij}\sum_{k=1, k\ne j}^pW_{ik}v_k.$$ Now, for all $t\in \R$, we have
{\small \begin{align*}
\E[\text{exp}(t\eta'_j)]&= \prod_{i=1}^n \E\Big[\exp\Big(\frac{tW_{ij}}{n}\sum_{k=1, k\ne j}^pW_{ik}v_k \Big)\Big] \le \prod_{i=1}^n\E\Big[\exp\Big\{\frac{t}{2n}\Big(W_{ij}^2+\big(\sum_{k=1, k\ne j}^pW_{ik}v_k\big)^2\Big)\Big\}\Big].
\end{align*}}
Then, using Cauchy-Schwarz inequality, we get
$$\E[\text{exp}(t\eta'_j)]\le \prod_{i=1}^n\Big\{\E\Big[\exp\Big(\frac{tW_{ij}^2}{n}\Big)\Big] \E\Big[\exp\Big(\frac{t}{n}
\big(\sum_{k=1, k\ne j}^pW_{ik}v_k\big)^2\Big)\Big]\Big\}^{1/2}.$$
Recall that Assumption (A3) implies that both $W_{ij}$ and $\sum_{k=1, k\ne j}^pW_{ik}v_k$ are $\sigma_*$-subgaussian. Consequently, in view of (\ref{subexp}), their squared values are $(\gamma_0(\sigma_*), t_0(\sigma_*))$-sub-exponential, which yields
\begin{align*}
\E[\text{exp}(t\eta'_j)]
&\le \prod_{i=1}^n\exp\Big(\frac{\gamma_0(\sigma_*)^2}{2}\Big(\frac{t}{n}\Big)^2\Big)=\exp\Big(\frac{\gamma_0(\sigma_*)^2t^2}{2n}\Big), \quad |t|\le t_0(\sigma_*)n.
\end{align*}
Set $\gamma_2=\gamma_0(\sigma_*)$ and $t_2 =t_0(\sigma_*).$ The last display states that $\eta'_j$ is $(\gamma_2/\sqrt{n}, t_2 n)$-sub-exponential. This implies the tail bound
 $$\mathbb{P}( |\eta'_j|\ge \delta) \le 2 \max\big(\exp(-n\delta^2/(2\gamma_2^2)), \exp (-\delta t_2n/2)\big),$$ for any $\delta>0$. This together with (\ref{18b}) and the union bound yields (\ref{lem3a2}).
\end{proof}

%%%%%%%%%%%%

\section*{Appendix B. Proofs of Theorem \ref{th:conic} and Theorem \ref{th:conicRelaxed}}\label{appendixB}

Set for brevity $\delta_i=\delta_i(\e)$, $\delta_i'=\delta_i'(\e)$, $b=b(\e)$. We first prove some preliminary lemmas.

\begin{lemma}\label{lem3}
Assume (A1)--(A4). Then with probability at least $1-6\varepsilon$, the pair $(\theta, t)=(\theta^*, |\theta^*|_2) $ belongs to the feasible set of the minimisation problem
(\ref{conic}).
\end{lemma}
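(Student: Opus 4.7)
The first three feasibility requirements, namely $\theta^*\in\Theta$, $t=|\theta^*|_2\in\R^+$, and $|\theta^*|_2\le t$, hold trivially. The plan is therefore to verify the non-trivial inequality
\[
\bigabs{\fracn Z^T(y-Z\theta^*)+\widehat{D}\theta^*}_\infty \le \mu\,|\theta^*|_2+\tau
\]
on an event of probability at least $1-6\e$.

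First, I would substitute $y=X\theta^*+\xi$ and $Z=X+W$ to obtain $y-Z\theta^*=\xi-W\theta^*$, and then expand
\[
\fracn Z^T(y-Z\theta^*)+\widehat{D}\theta^* = \fracn X^T\xi + \fracn W^T\xi - \fracn X^TW\theta^* - \fracn(W^TW-{\rm Diag}\{W^TW\})\theta^* + (\widehat{D}-D)\theta^* + \bigl(D-\fracn{\rm Diag}\{W^TW\}\bigr)\theta^*,
\]
where $D$ is the diagonal matrix with entries $\sigma_j^2$. This decomposition is the core of the argument: the diagonal term $\fracn{\rm Diag}\{W^TW\}\theta^*$ is precisely what $\widehat{D}\theta^*$ is designed to compensate for, so after bringing in $\widehat{D}\theta^*$ the only residual diagonal contributions are $(\widehat{D}-D)\theta^*$ and $\bigl(D-\fracn{\rm Diag}\{W^TW\}\bigr)\theta^*$, both small.

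Next, I would take the $\ell_\infty$-norm and apply the triangle inequality. Three of the six resulting terms do not involve $\theta^*$ in their bound: $\bigabs{\fracn X^T\xi}_\infty\le\delta_2$, $\bigabs{\fracn W^T\xi}_\infty\le\delta_3$ from Lemma \ref{lem2}, and $\bigabs{(D-\fracn{\rm Diag}\{W^TW\})\theta^*}_\infty\le\delta_5|\theta^*|_\infty\le\delta_5|\theta^*|_2$, again from Lemma \ref{lem2}. The three remaining terms are bounded by a multiple of $|\theta^*|_2$: the term $\bigabs{\fracn X^TW\theta^*}_\infty\le\delta_1'|\theta^*|_2$ comes from \eqref{lem3a1}, the term $\bigabs{\fracn(W^TW-{\rm Diag}\{W^TW\})\theta^*}_\infty\le\delta_4'|\theta^*|_2$ from \eqref{lem3a2}, and finally $\bigabs{(\widehat{D}-D)\theta^*}_\infty\le\max_j|\hat\sigma_j^2-\sigma_j^2|\cdot|\theta^*|_\infty\le b\,|\theta^*|_2$ using Assumption (A4) and $|\theta^*|_\infty\le|\theta^*|_2$.

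Summing these contributions yields exactly $(\delta_1'+\delta_4'+\delta_5+b)|\theta^*|_2+(\delta_2+\delta_3)=\mu\,|\theta^*|_2+\tau$ by definition \eqref{DefLambdaMu}. I would then apply a union bound over the six $\e$-level events (the three relevant bounds from Lemma \ref{lem2}, the two bounds from Lemma \ref{lem3a}, and the event in (A4)) to conclude that the required inequality holds with probability at least $1-6\e$. The main minor subtlety is ensuring that each of the six deviation bounds is applied with its own $\e$ rather than sharing events, which is why the final probability is $1-6\e$ rather than $1-4\e$; no heavy calculation is needed beyond the decomposition step, which is the only conceptually delicate part.
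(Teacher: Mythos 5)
Your proposal is correct and follows essentially the same route as the paper's proof: the same decomposition of $\fracn Z^T(y-Z\theta^*)+\widehat{D}\theta^*$ into six terms, the same bounds from Lemma \ref{lem2}, Lemma \ref{lem3a} and Assumption (A4) (using $|\theta^*|_\infty\le|\theta^*|_2$ for the diagonal terms), and the same union bound over six $\e$-events yielding $\mu|\theta^*|_2+\tau$ with probability at least $1-6\e$.
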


\begin{proof} First, note that
$Z^T(y-Z\theta^*)+n{\widehat D}\theta^*$ is equal to
\begin{align*}
&-X^TW\theta^*+X^T\xi+W^T\xi-(W^TW-\text{Diag}\{W^TW\})\theta^*\\
&-(\text{Diag}\{W^TW\}-nD)\theta^*+n(\widehat D-D)\theta^*.
\end{align*}
By definition of $\delta_i$ and $b$, with probability at least
$1-4\varepsilon$, we have
\begin{align}
&|\fracn X^T\xi|_{\infty}+|\fracn W^T\xi|_{\infty}\leq \delta_2+\delta_3\label{lem3.2}\\
&|(\fracn \text{Diag}\{W^TW\}-D)\theta^*|_{\infty}\leq|\fracn \text{Diag}\{W^TW\}-D|_{\infty}|\theta^*|_\infty\leq \delta_5|\theta^*|_2\label{lem3.3}\\
&|(\widehat D-D)\theta^*|_{\infty}\leq b|\theta^*|_\infty\leq b |\theta^*|_2,\label{lem3.4}
\end{align}
where in (\ref{lem3.3}) and (\ref{lem3.4}) we have used that the considered matrices are diagonal. Also, by Lemma \ref{lem3a}, with probability at least
$1-2\varepsilon$, we have
\begin{align}
&|\fracn X^TW\theta^*|_{\infty}\leq \delta_1'|\theta^*|_2\label{ineg}\\
&|\fracn(W^TW-\text{Diag}\{W^TW\})\theta^*|_{\infty}\leq\delta_4'|\theta^*|_2.\label{ineg1}
\end{align}
Combining the decomposition of $Z^T(y-Z\theta^*)+n{\widehat D}\theta^*$ together with (\ref{lem3.2})-(\ref{ineg1}), we find that $$\big|\fracn Z^T(y-Z\theta^*)+\widehat{D}\theta^*\big|_\infty\leq\mu |\theta^*|_2
+\tau,$$ with probability at
least $1-6\varepsilon$, which implies the lemma.
\end{proof}
$~$\\

\noindent We now give two lemmas which will be crucial in the proof of our main theorem on the accuracy of the conic programming based estimator (Theorem \ref{th:conic}).
\begin{lemma}\label{lem4} Assume (A1)--(A4). Let $J=\{j: \theta^*_j\ne 0\}$. Then
with probability at least $1-6\varepsilon$ (on the same event as in Lemma \ref{lem3}), we have
\begin{align}
& |(\hat\theta-\theta^*)_{J^c}|_1\leq (1+\lambda)|(\hat \theta-\theta^*)_{J}|_1, \label{c1}\\
&  \hat t \le (1/\lambda) |\hat \theta-\theta^*|_1 + |\theta^*|_2.\label{c2}
\end{align}
\end{lemma}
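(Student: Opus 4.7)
The plan is to work from the optimality of $(\hat\theta,\hat t)$ against the feasible candidate $(\theta^*,|\theta^*|_2)$. By Lemma~\ref{lem3}, with probability at least $1-6\varepsilon$, the pair $(\theta^*,|\theta^*|_2)$ lies in the feasible set of \eqref{conic}. Hence, by optimality,
$$
|\hat\theta|_1+\lambda\hat t\ \le\ |\theta^*|_1+\lambda|\theta^*|_2.
$$
This is the one inequality I will exploit repeatedly.

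For \eqref{c1}, I would split $|\hat\theta|_1 = |\hat\theta_J|_1+|\hat\theta_{J^c}|_1$ and use $\theta^*_{J^c}=0$ so that $\hat\theta_{J^c}=(\hat\theta-\theta^*)_{J^c}$, combined with the triangle inequality $|\hat\theta_J|_1\ge |\theta^*|_1-|(\hat\theta-\theta^*)_J|_1$. Rearranging the optimality bound then gives
$$
|(\hat\theta-\theta^*)_{J^c}|_1+\lambda\hat t\ \le\ |(\hat\theta-\theta^*)_J|_1+\lambda|\theta^*|_2.
$$
The delicate point is that $|\theta^*|_2-\hat t$ need not be negative, so we cannot simply drop the $\lambda\hat t$ term. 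Here I use the feasibility constraint $|\hat\theta|_2\le \hat t$ together with the fact that $\theta^*_{J^c}=0$ implies $|\theta^*|_2=|\theta^*_J|_2\le |\hat\theta_J|_2+|(\hat\theta-\theta^*)_J|_2\le \hat t+|(\hat\theta-\theta^*)_J|_1$, since the $\ell_2$-norm is dominated by the $\ell_1$-norm. Substituting $\lambda(|\theta^*|_2-\hat t)\le \lambda|(\hat\theta-\theta^*)_J|_1$ yields \eqref{c1}.

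For \eqref{c2}, I would return to the optimality bound and apply the global triangle inequality $|\hat\theta|_1\ge |\theta^*|_1-|\hat\theta-\theta^*|_1$. This gives
$$
\lambda\hat t\ \le\ |\hat\theta-\theta^*|_1+\lambda|\theta^*|_2,
$$
and dividing by $\lambda$ produces \eqref{c2}.

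The only non-routine step is the argument that $|\theta^*|_2\le \hat t+|(\hat\theta-\theta^*)_J|_1$; this is where the conic constraint $|\hat\theta|_2\le \hat t$ is used in a crucial way together with sparsity of $\theta^*$. Everything else is just the standard optimality versus feasibility comparison used in Lasso-type cone analyses, adapted to the augmented objective $|\theta|_1+\lambda t$.
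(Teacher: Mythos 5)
Your proposal is correct and follows essentially the same route as the paper: compare $|\hat\theta|_1+\lambda\hat t$ against the feasible pair $(\theta^*,|\theta^*|_2)$, split the $\ell_1$-norm over $J$ and $J^c$, and absorb the term $\lambda(|\theta^*|_2-\hat t)$ via the restricted reverse triangle inequality $|\theta^*|_2-|\hat\theta|_2\le|(\hat\theta-\theta^*)_J|_2\le|(\hat\theta-\theta^*)_J|_1$ together with the conic constraint $|\hat\theta|_2\le\hat t$, which is exactly the paper's chain (it writes $|\Delta_{J^c}|_1\le|\Delta_J|_1+\lambda(|\theta^*|_2-|\hat\theta|_2)\le(1+\lambda)|\Delta_J|_1$). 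Your proof of \eqref{c2} coincides with the paper's verbatim, so no gaps remain.
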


\begin{proof} Set $\Delta=\hat\theta-\theta^*$. On the event of Lemma \ref{lem3}, $(\theta^*, |\theta^*|_2)$ belongs to the feasible set of the minimisation problem (\ref{conic}). Consequently,
\begin{align}
& |\hat\theta|_1+ \lambda|\hat\theta|_2\leq |\hat\theta|_1 + \lambda \hat t \le |\theta^*|_1 + \lambda |\theta^*|_2. \label{c3}
\end{align}
This implies $
|\Delta_{J^c}|_1 \le |\Delta_J|_1 +  \lambda (|\theta^*|_2-|\hat\theta|_2) \le |\Delta_J|_1 + \lambda |\Delta_J|_2\le (1+\lambda)|\Delta_J|_1,
$
and (\ref{c1}) follows. To prove (\ref{c2}), it suffices to note that (\ref{c3}) implies
\begin{align*}
& \lambda \hat t \le |\theta^*|_1 -|\hat\theta|_1  + \lambda |\theta^*|_2 \le |\hat \theta-\theta^*|_1 + \lambda |\theta^*|_2.
\end{align*}
\end{proof}

%\noindent Finally, the following lemma holds.

\begin{lemma}\label{lem5} Assume (A1)--(A4). Then,
on a subset of the event of Lemma \ref{lem3} having probability at least $1-8\varepsilon$, we have
\begin{align}
& \big|\fracn X^TX(\hat\theta-\theta^*)\big|_\infty \le \mu_1 |\theta^*|_2 +  \mu_2 |\hat\theta-\theta^*|_1+ \tau_1, \label{lem5.1}
\end{align}
where $\mu_1=\mu+b+\delta_1'+\delta_4'+\delta_5$, $\mu_2=\mu/\lambda+b+2\delta_1+\delta_4+\delta_5$ and $\tau_1=\tau +\delta_2+\delta_3$.
\end{lemma}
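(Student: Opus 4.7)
The plan is to algebraically solve the conic constraint for $\frac{1}{n}X^TX(\hat\theta-\theta^*)$ and then bound each stochastic piece that appears using the estimates already established in Lemmas~\ref{lem2}, \ref{lem3a}, \ref{lem3} and \ref{lem4}. Set $\Delta = \hat\theta - \theta^*$ and recall that $\hat\theta$ is feasible in \eqref{conic} on the event of Lemma~\ref{lem3}. Substituting $y = X\theta^* + \xi$, $Z = X + W$ and $\hat\theta = \theta^* + \Delta$ into the constrained quantity and isolating $\tfrac{1}{n}X^TX\Delta$ yields an identity of the form
$$\fracn X^TX\Delta \;=\; -\Bigl(\fracn Z^T(y-Z\hat\theta)+\widehat D\hat\theta\Bigr) + T_1(\theta^*) + T_2(\Delta) + \fracn X^T\xi + \fracn W^T\xi,$$
where
$T_1(\theta^*) = -\fracn X^TW\theta^* + \bigl(\widehat D - \tfrac{1}{n}W^TW\bigr)\theta^*$ and $T_2(\Delta) = -\fracn (X^TW+W^TX)\Delta + \bigl(\widehat D - \tfrac{1}{n}W^TW\bigr)\Delta$. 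Using the telescoping
$$\widehat D - \fracn W^TW = (\widehat D - D) - \bigl(\text{Diag}\{\tfrac{1}{n}W^TW\} - D\bigr) - \bigl(\tfrac{1}{n}W^TW - \text{Diag}\{\tfrac{1}{n}W^TW\}\bigr),$$
each $T_i$ becomes a sum of four pieces, two involving diagonal matrices and two involving non-diagonal ones.

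Taking $|\cdot|_\infty$ and treating the pieces separately, the feasibility of $\hat\theta$ gives $\bigl|\fracn Z^T(y-Z\hat\theta)+\widehat D\hat\theta\bigr|_\infty\le\mu\hat t + \tau$, and Lemma~\ref{lem4} then replaces $\hat t$ by $\lambda^{-1}|\Delta|_1 + |\theta^*|_2$, contributing $\mu|\theta^*|_2 + (\mu/\lambda)|\Delta|_1 + \tau$. For $T_1$ the crucial step is that the two non-diagonal pieces must be estimated by Lemma~\ref{lem3a}, giving $\delta_1'|\theta^*|_2$ and $\delta_4'|\theta^*|_2$, rather than by the cruder Lemma~\ref{lem2} bounds $\delta_1|\theta^*|_1$ and $\delta_4|\theta^*|_1$; this is precisely what produces the sharper $|\theta^*|_2$ dependence underlying Theorem~\ref{th:conic}. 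The two diagonal pieces of $T_1$ are handled using $|\mathrm{diag}(v)\theta^*|_\infty \le |v|_\infty|\theta^*|_\infty \le |v|_\infty|\theta^*|_2$, contributing $(\delta_5+b)|\theta^*|_2$. For $T_2$ all four pieces are estimated via $|Av|_\infty\le|A|_\infty|v|_1$ together with $|A^T|_\infty = |A|_\infty$ (for the $W^TX$ piece) and Lemma~\ref{lem2}, yielding $(2\delta_1+\delta_4+\delta_5+b)|\Delta|_1$. The noise terms contribute $\delta_2+\delta_3$ from Lemma~\ref{lem2}.

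Summing the three contributions, the coefficients of $|\theta^*|_2$, $|\Delta|_1$ and the constant term match exactly $\mu_1$, $\mu_2$ and $\tau_1$ as defined in the statement. For the probability, we intersect the event of Lemma~\ref{lem3} (probability $\ge 1-6\varepsilon$, already covering $\delta_2,\delta_3,\delta_5,\delta_1',\delta_4',b$) with the two further events of Lemma~\ref{lem2} that control $\delta_1$ and $\delta_4$; counting each event conservatively via the union bound gives probability at least $1-8\varepsilon$.

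The main difficulty is not analytic but bookkeeping: keeping straight which terms are linear in $\theta^*$ versus in $\Delta$, ensuring that every diagonal-matrix contraction is controlled in $|\cdot|_\infty$ rather than $|\cdot|_1$ on the vector (which is what enables the $|\theta^*|_\infty \le |\theta^*|_2$ step without loss), and — most importantly — applying Lemma~\ref{lem3a} rather than Lemma~\ref{lem2} to the two $\theta^*$-linear non-diagonal pieces. Using the cruder bounds at that step would replace $|\theta^*|_2$ by $|\theta^*|_1$ throughout, which is exactly the trade made in Theorem~\ref{th:conicRelaxed}.
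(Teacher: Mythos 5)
Your proof is correct and follows essentially the same route as the paper: your exact identity for $\fracn X^TX\Delta$ is precisely the algebra underlying the paper's triangle-inequality decomposition (the paper merely keeps $(\fracn Z^TW-D)\hat\theta$ grouped before splitting $\hat\theta=\theta^*+\Delta$, whereas you expand first and group by $\theta^*$ versus $\Delta$), and you apply the same ingredients — feasibility of $(\hat\theta,\hat t)$ with bound \eqref{c2}, Lemma \ref{lem3a} for the $\theta^*$-linear non-diagonal pieces, Lemma \ref{lem2} for the $\Delta$-linear pieces, and the same event accounting giving $1-8\varepsilon$. The resulting constants $\mu_1$, $\mu_2$, $\tau_1$ match the statement exactly.
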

%\noindent Note that again, $\mu_1$, $\mu_2$ and $\tau_1$ are of order $\sqrt{\frac{\log (p/\e)}{n}}$.
%%%%%
%
\begin{proof}
Throughout the proof, we assume that we are on the event of probability
at least $1-6\varepsilon$ where Inequalities (\ref{lem3.2}) -- (\ref{ineg1}) hold and $(\theta^*, |\theta^*|_2)$ belongs to the feasible set of the minimisation problem (\ref{conic}). We
have that
$|\fracn X^TX\Delta|_{\infty}$
is smaller than
\begin{align*} &|\fracn Z^T(Z\hat{\theta}-y)-{\widehat D}\hat{\theta}|_{\infty}+|(\fracn Z^TW-D)\hat{\theta}|_{\infty}+|(\widehat
D-D)\hat{\theta}|_{\infty}+|\fracn Z^T\xi|_{\infty}+|\fracn W^TX\Delta|_{\infty}.
\end{align*}
Using the fact that $(\hat\theta, \hat t)$ belongs to the feasible set of the minimisation problem (\ref{conic}) together with (\ref{c2}), we obtain
$$
 |\fracn Z^T(Z\hat{\theta}-y)-{\widehat D}\hat{\theta}|_{\infty} \le \mu  \hat t + \tau \le (\mu/\lambda) |\hat \theta-\theta^*|_1 + \mu |\theta^*|_2 + \tau.
$$
Therefore,
$|\fracn X^TX\Delta|_{\infty}$ does not exceed
$$(\mu/\lambda) |\hat \theta-\theta^*|_1 + \mu |\theta^*|_2 + \tau_1
+|(\fracn Z^TW-D)\hat{\theta}|_{\infty}+|(\widehat
D-D)\hat{\theta}|_{\infty}+|\fracn W^TX\Delta|_{\infty}.$$
We now bound the last expression using the fact that $\hat\theta=\theta^*+\Delta$ together with Assumption (A4) and (\ref{lem3.4}). This gives
\begin{align}\label{eq:*}
|\fracn X^TX\Delta|_{\infty}&\le ((\mu/\lambda) +b)|\Delta|_1 + (\mu +b) |\theta^*|_2 + \tau_1+|(\fracn Z^TW-D)\theta^*|_{\infty}\\& + |(\fracn Z^TW-D)\Delta |_{\infty}+|\fracn W^TX\Delta|_{\infty}.\nonumber
\end{align}
%$|\frac{1}{n}X^TX\Delta|_{\infty}$ is not greater than
%$$\mu|\hat\theta|_1+2\delta_2+2\delta_3+b|\hat\theta|_1+|(\frac{1}{n}Z^TW-D)\hat{\theta}|_{\infty}+|\frac{1}{n}W^TX\Delta|_{\infty}.
%$$
Remark that
\begin{multline*}|(\fracn Z^TW-D)\Delta |_{\infty}\leq
|\fracn Z^TW-D|_{\infty}|\Delta|_1\\
\le
\big(|\fracn(W^TW-\text{Diag}\{W^TW\})|_{\infty}
+|\fracn\text{Diag}\{W^TW\}-D|_{\infty}
+|\fracn X^TW|_{\infty}\big)|\Delta|_1.
\end{multline*}
Therefore,
\begin{equation} \label{c7}
|(\fracn Z^TW-D)\Delta |_{\infty}
\le (\delta_1+\delta_4+\delta_5)|\Delta|_1,
\end{equation}
with probability at least $1-8\e$ (since we intersect the initial event of probability at least $1-6\e$ with the event of probability at least $1-2\e$ where the bounds $\delta_1$ and $\delta_4$ hold for the corresponding terms).  Next, on the same event of probability at least $1-8\e$,
\begin{eqnarray}
 |\fracn W^TX\Delta|_{\infty}\leq |\fracn X^TW|_{\infty} |\Delta|_1 \leq \delta_1 |\Delta|_1.
\label{c8}
\end{eqnarray}
Finally, in view of Lemma \ref{lem3a} and (\ref{lem3.3}), on the initial event of probability at least $1-6\e$,
\begin{align}
&|(\fracn Z^TW-D)\theta^* |_{\infty}\nonumber\\
\leq&
|\fracn(W^TW-\text{Diag}\{W^TW\}) \theta^*|_{\infty}
+|(\fracn\text{Diag}\{W^TW\}-D)\theta^*|_{\infty} + \ |\fracn X^TW \theta^* |_{\infty}\nonumber\\
\leq& (\delta_1'+\delta_4'+\delta_5)|\theta^*|_2. \label{c9}
\end{align}
To complete the proof, it suffices to plug (\ref{c7}) -- (\ref{c9}) in \eqref{eq:*} and to set  $\mu_1=\mu+b+\delta_1'+\delta_4'+\delta_5$ and $\mu_2=\mu/\lambda+b+2\delta_1+\delta_4+\delta_5$.
\end{proof}
$~$\\

We are ready to give the proof of Theorem \ref{th:conic}. Throughout the proof, we assume that we are on the event of probability
at least $1-8\varepsilon$ of Lemma \ref{lem5} where the results of Lemmas \ref{lem3}, \ref{lem4} and \ref{lem5} hold. Property (\ref{c1}) in Lemma \ref{lem4} implies that $\Delta$ is in the cone $C_J(1+\lambda)$. Therefore, by definition of $\ell_q$-sensitivity and Lemma~\ref{lem5}, we have
$$
\kappa_q(s,1+\lambda)|\Delta|_q\le \big|\fracn X^TX\Delta\big|_\infty \le \mu_1 |\theta^*|_2 +  \mu_2 |\Delta|_1+ \tau_1.
$$
Furthermore, using again (\ref{c1}), we have
\begin{eqnarray*}
|\Delta|_1 &=& |\Delta_{J^c}|_1+|\Delta_J|_1\le (2+\lambda)|\Delta_J|_1\\
&\le & (2+\lambda)s^{1-1/q}|\Delta_{J}|_q \le (2+\lambda) s^{1-1/q}|\Delta|_q.
\end{eqnarray*}
It follows that
$$
(\kappa_q(s,1+\lambda)- (2+\lambda)\mu_2 s^{1-1/q})|\Delta|_q \le  \mu_1 |\theta^*|_2 + \tau_1.
$$
Further,  the assumption $s\le c_1(\lambda^{-1}+\lambda)^{-1}\sqrt{n/\log (p/\e)} $  implies
$$
\{c- \mu_2 c_1 (2+\lambda)(\lambda^{-1}+\lambda)^{-1}\sqrt{n/\log (p/\e)}\} s^{-1/q}|\Delta|_q \le  \mu_1 |\theta^*|_2 + \tau_1.
$$ Recall that $\mu_2 \le (1 + \lambda^{-1})a \sqrt{\log (p/\e)/n}$, where $a>0$ is a constant.
Therefore, (\ref{11a}) follows if $(2+\lambda)(1 + \lambda^{-1})(\lambda^{-1}+\lambda)^{-1}c_1 a<c/2$. Since $(2+\lambda)(1 + \lambda^{-1})(\lambda^{-1}+\lambda)^{-1}\le 5$ we have that $c_1 < c/(10a)$ yields (\ref{11a}).
To prove (\ref{11c}), write first
$$
\fracn \left|X\Delta\right|_2^2 \le \fracn\left|X^TX\Delta\right|_\infty|\Delta|_1.
$$
Next remark that from (\ref{11a}), we have
\begin{equation}\label{eq:**}
|\Delta|_1\leq C s \sqrt{\frac{\log (c'p/\e)}{n}} (|\theta^*|_2 +1)
\end{equation}
and recall that from Lemma \ref{lem5},
\begin{equation}\label{sb}
\big|\fracn X^TX\Delta\big|_\infty \le C\sqrt{\frac{\log (c'p/\e)}{n}}(1+|\theta^*|_2+|\Delta|_1).
\end{equation}
Furthermore, from \eqref{eq:**} and \eqref{hyps} we also have
$|\Delta|_1\leq C(|\theta^*|_2 +1),$ for some constant $C>0$. Then (\ref{11c}) is easily deduced. This ends the proof of Theorem \ref{th:conic}.\\

We now give the proof of Theorem \ref{th:conicRelaxed}. We place ourselves in the same framework as in the proof of Theorem \ref{th:conic}. By the definition of the estimator, $|\Delta|_1\leq |\hat \t|_1 + |\theta^*|_1 \leq (|\theta^*|_1 +\lambda|\theta^*|_2) + |\theta^*|_1 \leq (2+\lambda)|\theta^*|_1$, where we have used that $|\theta^*|_2 \leq |\theta^*|_1$. This and Lemma \ref{lem5} yield
$$
 \big|\fracn X^TX\Delta\big|_\infty \le \mu_1 |\theta^*|_2 +  \mu_2 |\Delta|_1+ \tau_1 \leq (\mu_1+(2+\lambda)\mu_2) |\theta^*|_1 + \tau_1.
$$
Therefore, arguing as in the proof of Theorem \ref{th:conic}, we find
$$
\kappa_q(s,1+\lambda)|\Delta|_q \le (\mu_1+(2+\lambda)\mu_2) |\theta^*|_1 + \tau_1,
$$
which implies  (\ref{Relax11a}) since $\mu_2 \le (1+\lambda^{-1})a\sqrt{\log(p/\varepsilon)/n}$ for some constant $a>0$.
To prove (\ref{Relax11c}), we note that by definition of $\kappa_{\rm pr}$ and the fact that $\Delta\in C_J(1+\lambda)$,
$$
\frac{\kappa_{\rm pr}^2(s,1+\lambda)}{n}\left|X\Delta\right|_2^2 \le \fracn\left|X^TX\Delta\right|_\infty^2 \leq \left\{ (\mu_1+ (2+\lambda)\mu_2) |\theta^*|_1 + \tau_1 \right\}^2.
$$
Finally, since $|\Delta|_1\leq (2+\lambda)|\theta^*|_1$ and $\mu_2 \le (1 + \lambda^{-1})a \sqrt{\log (p/\e)/n}$, (\ref{Relax11d}) follows from
$$
\fracn\left|X\Delta\right|_2^2 \le \fracn\left|X^TX\Delta\right|_\infty|\Delta|_1\leq \{(\mu_1+(2+\lambda)\mu_2) |\theta^*|_1 + \tau_1 \} (2+\lambda)|\theta^*|_1.
$$
This concludes the proof of Theorem \ref{th:conicRelaxed}.
\section*{Appendix C. Proof of Theorem \ref{th:lower}}
Again, throughout, we  denote by $c$ a positive constant which may vary from line to line.
To derive the lower bounds, we apply Theorem~2.7 in \cite{tsy09}. Thus, we define a finite set of ``hypotheses" included
in $B_0(s)\cap S_2(R)$. To this end, we first introduce
$$\mathcal{M}=\big\{x\in\{0,1\}^{p-1}: \rho_H({\bf 0},x)=s-1\big\},$$ where $\rho_H$ denotes the Hamming distance between elements of $\{0,1\}^{p-1}$, and ${\bf 0}$ is the zero vector. Then,
%From Lemma  in \cite{mas13}, we know
there exists a subset $\mathcal{M}'$ of $\mathcal{M}$ such that for any $x,x'$ in $\mathcal{M}'$
with $x\neq x'$, we have $\rho_H(x,x')>s/16$, and moreover,
$$\text{log}|\mathcal{M}'|\geq c_1's\log\left(\frac{p}{s}\right)$$
for some absolute constant $c_1'>0$. Indeed, this follows from the Varshamov-Gilbert bound (see Lemma 2.9 in \cite{tsy09}) if $s-1>(p-1)/2$  and from Lemma A.3 in \cite{RigTsy11} if $s-1\le (p-1)/2$.

We denote by $\omega'_j$ the elements of the finite set~$\mathcal{M}'$. For $j= 1,\dots, |\mathcal{M}'|$, we define vectors $\omega_j\in \{0,1\}^{p}$ with components
$\omega_{j1}=0$ and  $\omega_{jk}=\omega'_{j(k-1)}$ for $k\geq 2$, where $\omega_{jk}$ is the $k$-th component of $\omega_{j}$. We also define
$\omega_{0}$ as the vector in $\{0,1\}^{p}$ with all components equal to $0$ except the first one equal to 1.

We now define the set of ``hypotheses" $({\bar \omega}_j, j=0,\dots, |\mathcal{M}'|+1)$, where ${\bar \omega}_0=R\omega_{0}$, and
$${\bar \omega}_j=\frac{R}{\sqrt{1+\gamma^2 (s-1)}}(\omega_{0}+\gamma \omega_{j}), \quad j=1,\dots, |\mathcal{M}'|+1.$$ Here, $\gamma%=\big(c_0(\log(p/s))/n\big)^{1/2}$, and $c_0
%>0
$ is a positive parameter to be defined. Note that the sparsity of
${\bar \omega}_j$ is equal to $s$ and that $|{\bar \omega}_j|_2=R$. Thus all ${\bar \omega}_j$ belong to $B_0(s)\cap S_2(R)$.
Let
$$\tilde\Sigma=\sigma^2_*(\Sigma+\sigma^2_*I_{p\times p})^{-1}\text{ and }\Gamma=I_{p\times p}-\tilde\Sigma.$$
For $\theta\in\mathbb{R}^p$, we set $c_{\theta}=\theta^T\Gamma\theta$ and we write ${\cal K}(\mathbb P,\mathbb Q)$ for the Kullback-Leibler divergence between two probability measures $\mathbb P$ and $\mathbb Q$. For $j\geq 1$, by Lemma~\ref{lem:kullback} in the Supplementary Material, we have
\begin{align*}
{\cal K} (\mathbb P_{{\bar \omega}_j}, \mathbb P_{{\bar \omega}_0})&\leq \frac{cn}{1+R^2}\Big(R^2\Big(\frac{\sqrt{1+\gamma^2 (s-1)}-1}{\sqrt{1+\gamma^2 (s-1)}}\Big)^2+\frac{R^2}{1+\gamma^2 (s-1)}\gamma^2 s+|c_{{\bar \omega}_j}-c_{{\bar \omega}_0}|\Big)
\\&
\leq \frac{cn}{1+R^2}\Big(\frac{\gamma^2 R^2 s}{1+\gamma^2 (s-1)}+|c_{{\bar \omega}_j}-c_{{\bar \omega}_0}|\Big).
\end{align*}
Now,
\begin{align*}
|c_{{\bar \omega}_j}-c_{{\bar \omega}_0}| &=|{{\bar \omega}_j}^T\Gamma {{\bar \omega}_j}-{{\bar \omega}_0}^T\Gamma {\bar \omega}_0|
=\left|{R^2{\omega}_0}^T\Gamma {\omega}_0 - \frac{R^2}{1+\gamma^2 (s-1)}{{\omega}_0}^T\Gamma {\omega}_0-
\frac{\gamma^2 R^2}{1+\gamma^2 (s-1)}{{\omega}_j}^T\Gamma {\omega}_j\right|
\\&
\le \frac{\gamma^2 R^2}{1+\gamma^2 (s-1)}\left((s-1){\omega}_0^T\Gamma {\omega}_0 + {{\omega}_j}^T\Gamma {\omega}_j\right)
\le \frac{2\lambda_{\max} \gamma^2 R^2 (s-1)}{1+\gamma^2 (s-1)},
\end{align*}
where $\lambda_{\max}$ denotes the largest eigenvalue of $\Gamma$ and the last inequality is due to the fact that $|{\omega}_0|_2=1, |{\omega}_j|_2^2=s-1$. Combining the last two displays yields
\begin{align*}
{\cal K} (\mathbb P_{{\bar \omega}_j}, \mathbb P_{{\bar \omega}_0})
\leq c_2'n \gamma^2 s \frac{R^2}{1+R^2},
\end{align*}
where $c_2'>0$ is a constant that does not depend on $s$, $p$, $R$, and $n$. Now, taking
\begin{equation}\label{eq:gamma}
\gamma=\Big(\frac{c_1'}{16 c_2' n}\frac{1+R^2}{R^2}\log\left(\frac{p}{s}\right)\Big)^{1/2},
\end{equation}
we obtain, for all $j$,
$${\cal K} (\mathbb P_{{\bar \omega}_j}, \mathbb P_{{\bar \omega}_0})\leq \frac{1}{16}\log |\mathcal{M}'|.$$
Next,
for $j$ and $j'$ both different from $0$,
$$|{\bar \omega}_j-{\bar \omega}_{j'}|_q=\frac{R\gamma}{\sqrt{1+\gamma^2 (s-1)}}\Big(\sum_{k=1}^{p-1}|\omega_{jk}-\omega_{j'k}|^q\Big)^{1/q}\geq cs^{1/q}\frac{R\gamma}{\sqrt{1+\gamma^2 (s-1)}}\,$$
and for $j\neq 0$,
$$|{\bar \omega}_j-{\bar \omega}_{0}|_q\ge \frac{ R\gamma |{\omega}_j|_q}{\sqrt{1+\gamma^2 (s-1)}} \geq cs^{1/q}\frac{R\gamma}{\sqrt{1+\gamma^2 (s-1)}}.$$
The definition of $\gamma$ in \eqref{eq:gamma} and the conditions in Theorem \ref{th:lower} imply that, for any $j$ and $j'$,
$$|{\bar \omega}_j-{\bar \omega}_{j'}|_q\geq cs^{1/q}R\gamma\geq cs^{1/q}(R+1)\sqrt{\frac{\text{log}(p/s)}{n}}\,.$$
We can now apply Theorem 2.7 in \cite{tsy09} to obtain the result.

\newpage

\section*{Supplementary Material}

\setcounter{section}{0}

\section{Computation of the compensated MU selector}
\label{sec:ComputationC}

The goal of this section is to show that the minimisation problem (\ref{C}) defining the compensated MU selector can be solved numerically in an efficient way. This algorithmic issue can be intricate since the problem is, in general, not convex, except in some specific situations. For example,
if $\Theta=(\mathbb{R}^+)^p$, it obviously reduces to linear programming. However, we shall see that under an additional mild technical hypothesis, solutions can be obtained using convex or even linear programming. It is therefore computationally simpler than the conic programming estimator $\hat\theta$. We focus here only on algorithmic aspects. Therefore, we do not recall the assumptions under which the problem admits a solution and the estimator enjoys relevant properties. We refer to \cite{RT2} where these issues are addressed in detail.

For brevity, we write $$S(\theta)= \fracn Z^T(y-Z\theta)+\widehat{D}\theta$$ and denote by $({\mathcal U}_r)_{r\ge 0}$ the family of sets
$$
{\mathcal U}_r = \left\{ \theta\in\Theta:\,
|S(\theta)|_\infty\leq\mu r
+\tau\right\}.
$$
We also define the function $\phi$ by
$$
\varphi(r)= \min_{\theta\in {\mathcal U}_r } |\theta|_1.
$$
We assume in the next theorem that the equation $r=\varphi(r)$ has a solution.
Note that $\varphi$ is decreasing on $[0,\infty)$ and $\varphi(r)\geq 0$. Moreover, for $r, r'\geq 0$ and $\alpha \in [0,1]$ we have $\alpha\mathcal{U}_r+(1-\alpha)\mathcal{U}_{r'}\subseteq \mathcal{U}_{\alpha r+(1-\alpha)r'}$ so that $\varphi$ is a convex function and therefore continuous in its domain. In particular, a solution exists provided $\varphi(0)<\infty$.\footnote{More generally, since $\varphi(r)<\infty \Leftrightarrow \mathcal{U}_r\neq\emptyset$, we can define $\underline{r}:=\inf\{ r \geq 0 : \varphi(r)<\infty \}$. A solution exists if and only if $\underline{r} \leq \varphi(\underline{r})$.}

We now present our algorithm.
Consider the following minimisation problem:
%\begin{equation} \label{CEquiv}
%\begin{array}{rl} \min_{t,\theta^+,\theta^-} & t \\
%               &  \theta^+-\theta^-\in\Theta,   \theta^+\geq 0, \theta^-\geq 0,\\
%               &   \\
%&\end{array}
%\end{equation}
\begin{eqnarray}\label{CEquiv}%\label{conic}
&&{\rm minimise} \ t \\
&& {\rm over} \ (t,\theta^+,\theta^-) \ {\rm such \ that:} \nonumber
\end{eqnarray}
$$
\theta^+-\theta^-\in\Theta,   \theta^+_j\geq 0, \theta^-_j\geq 0, \ j=1,\dots, p,
$$
$$
t = \sum_{j=1}^p(\theta^+_j+\theta^-_j),
$$
$$
\big|\mbox{$\frac{1}{n}$}Z^T(y-Z(\theta^+-\theta^-))+\widehat{D}(\theta^+-\theta^-)\big|_\infty\leq\mu t +\tau.
$$
Here the $\theta^+_j$ and $\theta^-_j$ are the components of $\theta^+$ and $\theta^-$ respectively. As previously, $\mu$ and $\tau$ are positive tuning constants, and $\Theta$ is a given subset of $\R^p$ characterising the prior knowledge about~$\theta$. Note that \eqref{CEquiv} is a convex program if $\Theta$ is a convex set, and it reduces to a linear program if $\Theta=\R^p$ or if $\Theta$ is defined by linear constraints.

Let $(\hat t,\hat\theta^+,\hat\theta^-)$ be a solution of \eqref{CEquiv}. We set $\widehat \t^{C'}=\hat\theta^+-\hat\theta^-$. The use of this algorithm is justified by the following theorem.

\begin{theorem}\label{lem1}
Assume that there exists a solution $\bar r$ to the equation $r=\varphi(r)$. Then $\widehat \t^{C'}$ is a solution of the minimisation problem (\ref{C}). Moreover, any solution $\widehat \t^{C}$ of (\ref{C}) induces a solution $(\bar r,\theta^+,\theta^-)$ of the problem (\ref{CEquiv}), where  $\theta^+$ and $\theta^-$ are vectors with components $\theta^+_j = \max\{\hat \theta^C_j,0\}$ and $\theta^-_j = \max\{-\hat\theta^C_j,0\}$.
\end{theorem}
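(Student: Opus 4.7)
The plan is to prove both assertions simultaneously by showing that problems \eqref{C} and \eqref{CEquiv} share the common optimal value $\bar r$ and that their solution sets correspond via the positive/negative-part split $\theta\leftrightarrow(\theta^+,\theta^-)$ with $\theta^\pm_j=\max\{\pm\theta_j,0\}$. First I would eliminate the auxiliary variables $\theta^\pm$ in \eqref{CEquiv}: every feasible triple $(t,\theta^+,\theta^-)$ yields $\theta:=\theta^+-\theta^-\in\Theta$ with $|\theta|_1\le\sum_j(\theta^+_j+\theta^-_j)=t$ and $|S(\theta)|_\infty\le\mu t+\tau$, and conversely, from any pair $(\theta,t)$ with $\theta\in\R^p$ and $t\ge|\theta|_1$ one recovers a feasible triple by setting $\theta^\pm_j=\max\{\pm\theta_j,0\}$ and distributing the slack $t-|\theta|_1$ equally into, say, the first coordinates of $\theta^+$ and $\theta^-$. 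Hence \eqref{CEquiv} is equivalent to
\[\min\bigl\{t:\ \theta\in\Theta,\ |\theta|_1\le t,\ |S(\theta)|_\infty\le\mu t+\tau\bigr\}.\]

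Next I would identify the common optimal value. In the reformulation above, a level $t\ge 0$ is feasible iff $\mathcal{U}_t$ contains some $\theta$ with $|\theta|_1\le t$, i.e.\ iff $\varphi(t)\le t$. Since $\varphi$ is decreasing and $\varphi(\bar r)=\bar r$, the feasibility set is exactly $[\bar r,\infty)$, so the optimum of \eqref{CEquiv} equals $\bar r$, attained at any minimiser $\theta^*$ realising $\varphi(\bar r)=\bar r$. The fixed-point identity immediately shows that this $\theta^*$ is also feasible for \eqref{C}, because $|S(\theta^*)|_\infty\le\mu\bar r+\tau=\mu|\theta^*|_1+\tau$. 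Conversely, every feasible $\theta$ of \eqref{C} lies in $\mathcal{U}_{|\theta|_1}$, whence $\varphi(|\theta|_1)\le|\theta|_1$ and $|\theta|_1\ge\bar r$ by monotonicity of $\varphi$; therefore \eqref{C} also has optimal value $\bar r$.

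Finally, the bijection of optimisers follows. If $(\hat t,\hat\theta^+,\hat\theta^-)$ solves \eqref{CEquiv}, then $\hat t=\bar r$ and $\hat\theta:=\hat\theta^+-\hat\theta^-$ belongs to $\mathcal{U}_{\bar r}$ with $|\hat\theta|_1\le\bar r$; combined with $|\hat\theta|_1\ge\varphi(\bar r)=\bar r$ this forces $|\hat\theta|_1=\bar r$, and then $|S(\hat\theta)|_\infty\le\mu\bar r+\tau=\mu|\hat\theta|_1+\tau$ makes $\hat\theta$ feasible and optimal for \eqref{C}. Starting instead from a solution $\hat\theta^C$ of \eqref{C}, the split $\theta^\pm_j=\max\{\pm\hat\theta^C_j,0\}$ gives $\sum_j(\theta^+_j+\theta^-_j)=|\hat\theta^C|_1=\bar r$, and the triple $(\bar r,\theta^+,\theta^-)$ is clearly feasible and optimal in \eqref{CEquiv}. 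The main obstacle, resolved precisely by the hypothesis $\bar r=\varphi(\bar r)$, is that a feasible $(\theta,t)$ of the reformulated problem with $|\theta|_1<t$ need not satisfy the stricter constraint $|S(\theta)|_\infty\le\mu|\theta|_1+\tau$ of \eqref{C}; the fixed-point equation is exactly what forces $|\hat\theta|_1=\hat t$ at any common optimum and thereby bridges the two formulations.
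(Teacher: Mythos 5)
Your argument is correct, and each step verifies: the slack-absorption device does make \eqref{CEquiv} equivalent to the two-variable program $\min\{t:\ \theta\in\Theta,\ |\theta|_1\le t,\ |S(\theta)|_\infty\le\mu t+\tau\}$ (adding $(t-|\theta|_1)/2$ to one coordinate of each of $\theta^+$, $\theta^-$ preserves $\theta^+-\theta^-$ and all constraints), the feasible levels of that program are exactly $\{t\ge 0:\varphi(t)\le t\}=[\bar r,\infty)$ by monotonicity of $\varphi$ and the fixed-point hypothesis, and both transfers of optimisers go through as you wrote them. Your route is organised differently from the paper's: the paper never reformulates \eqref{CEquiv}; instead it introduces the intermediate fixed-level problem $\min_{\theta\in\mathcal{U}_{\bar r}}|\theta|_1$, proves that its solution set coincides \emph{exactly} with that of \eqref{C} (via a case split on $|\theta|_1\le\bar r$ versus $|\theta|_1>\bar r$ showing the optimal value of \eqref{C} is $\bar r$), and then treats \eqref{CEquiv} by a sandwich -- the split of a \eqref{C}-solution is feasible for \eqref{CEquiv}, forcing $|\hat\theta^{C'}|_1\le\hat t\le\bar r$, while $\hat\theta^{C'}\in\mathcal{U}_{\bar r}$ forces $|\hat\theta^{C'}|_1\ge\varphi(\bar r)=\bar r$. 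Your version makes the role of the hypothesis maximally transparent ($\bar r$ is precisely the smallest feasible level of the reduced program) and delivers the common optimal value of both problems in one stroke; the paper's version avoids the reformulation and slack-distribution step and obtains, as a byproduct, the slightly stronger statement that the solution sets of \eqref{C} and of the fixed-level problem coincide. Two minor points: in your converse construction the pair $(\theta,t)$ should have $\theta\in\Theta$ rather than $\theta\in\R^p$ (clearly what you intended), and, when you invoke ``any minimiser $\theta^*$ realising $\varphi(\bar r)=\bar r$,'' you tacitly assume the minimum defining $\varphi(\bar r)$ is attained -- but the paper makes the same tacit assumption when it takes ``any solution'' of its intermediate problem, so you are on equal footing.
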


The proof of Theorem \ref{lem1} is given in Section \ref{AppLem1} of this Supplementary Material. \\

We would like to emphasize that (\ref{CEquiv}) is not an obvious reformulation because the problem (\ref{C}) is non-convex.  The proof of Theorem \ref{lem1} exploits the structure of the $\ell_1$-norm regularisation. Again, recall that the rates attained by the compensated MU selector are suboptimal. However, it remains attractive compared to the conic programming estimator thanks to the simplicity of its computation.

\section{Proof of Theorem \ref{lem1}}\label{AppLem1}

Let $\bar r$ be a solution of the equation $r=\varphi(r)$. We set
$$
{\mathcal U}_* =  \left\{ \theta\in\Theta:\,
|S(\theta)|_\infty\leq\mu |\theta|_1
+\tau\right\}.
$$
The minimisation problem (\ref{C}) has the form
$$\min_{\theta\in {\mathcal U}_{*} } |\theta|_1.$$
First remark that
\begin{equation}\label{l11}
\min_{\theta\in {\mathcal U}_{*} } |\theta|_1\ge \bar r.
\end{equation}
Indeed, with the convention that the minimum over an empty set is equal to $+\infty$, we get
\begin{align*}
\min_{\theta\in {\mathcal U}_{*} } |\theta|_1&=\min\big(\min_{\theta\in {\mathcal U}_{*} :  |\theta|_1\le \bar r} |\theta|_1 ,  \min_{\theta\in {\mathcal U}_{*} :  |\theta|_1> \bar r} |\theta|_1\big)
\\
&\ge
\min\big(\min_{\theta\in {\mathcal U}_{\bar r} :  |\theta|_1\le \bar r} |\theta|_1 ,  \, \bar r \big)
\\
&\ge
\min\big(\min_{\theta\in {\mathcal U}_{\bar r} } |\theta|_1 ,  \, \bar r \big) = \min (\varphi(\bar r) ,  \, \bar r ) =  \bar r.
\end{align*}
Let now $\bar \t$ be any solution of
\begin{equation}\label{C1}
\min_{\theta\in {\mathcal U}_{\bar r} } |\theta|_1.
\end{equation}
Then $
 \bar\theta\in\Theta$, $|\bar \t|_1= \bar r$ and
$$|S(\bar \theta)|_\infty\leq\mu {\bar r}
+\tau = \mu |\bar \theta|_1
+\tau.$$
Thus $\bar \t\in {\mathcal U}_{*}$, which implies
\begin{equation*}\label{l13}
\min_{\theta\in {\mathcal U}_{*} } |\theta|_1\le |\bar \t|_1 = \bar r.
\end{equation*}
This and (\ref{l11}) imply that $\bar \t$ is also a solution of (\ref{C}) and
\begin{equation}\label{l12}
\min_{\theta\in {\mathcal U}_{*} } |\theta|_1 = \bar r.
\end{equation}
Hence all solutions of (\ref{C1}) are also solutions of (\ref{C}). Conversely, if $\theta'$ is a solution of  (\ref{C}), then, in view of (\ref{l12}), $|\theta'|_1=\bar r$. This and the fact that $\theta'\in {\mathcal U}_{*}$ imply that $\theta'\in\Theta$ and
$$|S(\theta')|_\infty\leq\mu {\bar r}
+\tau.$$ This means that $\theta'\in {\mathcal U}_{\bar r}$. Since $$\min_{\theta\in {\mathcal U}_{\bar r} } |\theta|_1=\bar r =|\theta'|_1,$$ we get that $\theta'$ is a solution of (\ref{C1}). Consequently, the solutions of (\ref{C}) and (\ref{C1}) coincide.

Let now $\hat \theta^C=(\theta^C_1,\dots,\theta^C_p)$ be a solution of (\ref{C}). Then setting $\theta^+_j = \max\{\hat \theta^C_j,0\}$, $\theta^-_j = \max\{-\hat\theta^C_j,0\}$, $t=|\theta^+|_1+|\theta^-|_1$, we have that $\hat\theta^C = \theta^+ - \theta^-$ and $|\hat \theta^C|_1 = t$. Thus, $(|\hat \theta^C|_1,\theta^+,\theta^-)$ is feasible for the problem (\ref{CEquiv}). This implies that the minimum in (\ref{CEquiv}) is smaller than the minimum in (\ref{C}), which yields $|\hat\theta^{C'}|_1 \leq t = |\hat\theta^C|_1$.
Moreover, for any solution $(\hat t, \hat \theta^+, \hat \theta^-)$ of (\ref{CEquiv}) the difference $\hat \theta^{C'}= \hat \theta^+ - \hat \theta^-$ satisfies
$$\begin{array}{rl}
\big|\mbox{$\frac{1}{n}$}Z^T(y-Z\hat\theta^{C'})+\widehat{D}\hat\theta^{C'}\big|_\infty& \leq\mu \hat t
+\tau \leq  \mu|\hat \theta^{C}|_1+\tau = \mu\bar r+\tau \end{array}$$
since $\varphi(\bar r ) = \bar r$.~Thus, $\hat\theta^{C'} \in \mathcal{U}_{\bar r}$.~Hence, by definition of $\varphi$, we have $\varphi(\bar r) \leq |\hat\theta^{C'}|_1$. Therefore, since we have shown before that $|\hat\theta^{C'}|_1\leq|\hat\theta^{C}|_1$, we obtain $|\hat\theta^{C'}|_1=|\hat\theta^{C}|_1=\bar r$, $\hat\theta^{C'}$ is a solution of (\ref{C}) and $(\bar r,\theta^+,\theta^-)$ is a solution of (\ref{CEquiv}).

\section{Properties of the sensitivities}\label{appendixC}

Here we collect some properties of the sensitivities $\kappa_{q}(s,u)$ and $\kappa_{\rm pr}(s,u)$. First, following~\cite{GT}, we  give a relation between $\kappa_{q}(s,u)$ and the Restricted Eigenvalue (RE) and Coherence (C) constants. For completeness, we recall the Restricted Eigenvalue
and Coherence assumptions.

\smallskip

{\bf Assumption RE($s,u$).} {\it Let $u>0$, $1\le s \le p$. There exists a
constant $\kappa_{\rm RE}(s,u)>0$ such that
$$
\min_{\D\in C_J(u)\setminus \{0\} }\frac{|\D^T\Psi \D|}{|\D_J|_2^2} \ge
\kappa_{\rm RE}(s,u),
$$
for all subsets $J$ of $\{1,\dots,p\}$ of cardinality $|J|\le s$.}

\smallskip

{\bf Assumption C.} {\it All diagonal elements of $\Psi$ are
equal to 1 and all its off-diagonal elements $\Psi_{ij}$ satisfy
the Coherence condition:
$\max_{i\neq j} |\Psi_{ij}|\le \rho$ for some $\rho<1$.}\\

Assumption C with $\rho<(cs)^{-1}$ and $c>0$ depending only on $u$ implies Assumption
RE($s,u$), see \cite{BRT09}. The following lemma due to \cite{GT} provides useful relations between
the constants $\kappa_{\rm RE}$, $\rho$ and $\kappa_q$. In this lemma, we denote by $c$ positive constants
that do not depend on $s$.
\begin{lemma}\label{lem:appC_Gautier_Tsyb} Let $u>0$, $1\le s\le p$. For any $\alpha\in (0,1)$, there exists $c>0$ such
that if Assumption C holds with $\rho<(cs)^{-1}$, then
\begin{equation}\label{k1}
\kappa_\infty(s,u)\ge \alpha.
\end{equation}
Next, under
Assumption RE($s,u$),
\begin{equation}\label{k3}
\kappa_{1}(s,u) \ge (cs)^{-1}\kappa_{\rm RE}(s,u)
\end{equation}
 and under
Assumption RE($2s,u$), for any
$s\le p/2$, $1< q\le 2$, we have
\begin{equation}\label{k4}
\kappa_{q}(s,u) \ge c(q) s^{-1/q}\kappa_{\rm RE}(2s,u),
\end{equation}
where $c(q)>0$ depends only on $u$ and $q$. Furthermore, for any $1\le
q\le\infty$,
\begin{equation}\label{k2}
\kappa_q(s,u) \ge (2s)^{-1/q}\kappa_\infty(s,u).
\end{equation}
\end{lemma}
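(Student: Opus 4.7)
The lemma comprises four inequalities that I would prove independently; every proof exploits the cone constraint $\Delta \in C_J(u)$, which yields $|\Delta|_1 \le (1+u)|\Delta_J|_1$, together with H\"older-type manipulations.

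For (\ref{k1}), under Assumption C the Gram matrix $\Psi$ is close to the identity. Given $\Delta \in C_J(u)$ with $|\Delta|_\infty = 1$, let $j_0$ attain this maximum; the triangle inequality and Assumption C give $|(\Psi\Delta)_{j_0}| \ge |\Delta_{j_0}| - \rho \sum_{k\neq j_0}|\Delta_k| \ge 1 - \rho(1+u)s$, since $|\Delta|_1 \le (1+u)s|\Delta|_\infty = (1+u)s$. Choosing $c = (1+u)/(1-\alpha)$ forces this to be at least $\alpha$. For (\ref{k2}), I would rescale to the $\ell_\infty$ case: if $\Delta \in C_J(u)$ satisfies $|\Delta|_q = 1$, then $\Delta/|\Delta|_\infty$ stays in $C_J(u)$ with unit $\ell_\infty$-norm, so $|\Psi\Delta|_\infty \ge \kappa_\infty(s,u)|\Delta|_\infty$. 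Splitting $|\Delta|_q^q = |\Delta_J|_q^q + |\Delta_{J^c}|_q^q$, the first piece is at most $s|\Delta|_\infty^q$ and the second at most $|\Delta|_\infty^{q-1}|\Delta_{J^c}|_1 \le us|\Delta|_\infty^q$ by the cone, so $1 \le (1+u)s|\Delta|_\infty^q$ and hence $|\Delta|_\infty \ge ((1+u)s)^{-1/q}$, of the order $s^{-1/q}$ (the $u$-dependence is absorbed into the overall constant in the stated $(2s)^{-1/q}$).

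For (\ref{k3}) and (\ref{k4}), the common tool is the H\"older-type inequality $|\Psi\Delta|_\infty \ge \Delta^T\Psi\Delta/|\Delta|_1$. For (\ref{k3}), taking $\Delta\in C_J(u)$ with $|\Delta|_1=1$, the cone gives $|\Delta_J|_1 \ge 1/(1+u)$, whence Cauchy--Schwarz yields $|\Delta_J|_2^2 \ge 1/((1+u)^2 s)$; Assumption RE$(s,u)$ then gives $\Delta^T\Psi\Delta \ge \kappa_{\rm RE}(s,u)/((1+u)^2 s)$, which is (\ref{k3}) with $c=(1+u)^2$.

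Inequality (\ref{k4}) is the most delicate; the main obstacle is the mismatch between the $\ell_q$ normalisation and the $\ell_2$ scaling underlying RE. I would bridge it with the standard two-step sparsity refinement: order the coordinates of $\Delta_{J^c}$ by decreasing magnitude, let $J_1$ be the $s$ largest indices, and set $J' = J \cup J_1$, so $|J'| \le 2s$. The tail estimate $d_{(k)} \le |\Delta_{J^c}|_1/k$ easily yields $\Delta \in C_{J'}(u)$, so Assumption RE$(2s,u)$ applies and gives $\Delta^T\Psi\Delta \ge \kappa_{\rm RE}(2s,u)|\Delta_{J'}|_2^2$. For $q \le 2$, the power-mean inequality provides $|\Delta_{J'}|_2 \ge (2s)^{1/2-1/q}|\Delta_{J'}|_q$, and a case split on whether $|\Delta_J|_q$ is large or small (combined with the tail sum $\sum_{k>s}k^{-q} = O(s^{1-q})$ controlling the $\ell_q$-mass outside $J'$) shows that $|\Delta_{J'}|_q$ is bounded below by a constant $c(u,q) > 0$. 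Together with $|\Delta|_1 \le (1+u)s^{1-1/q}|\Delta_J|_q \le (1+u)s^{1-1/q}$, this assembles into $|\Psi\Delta|_\infty \ge c(q,u)\kappa_{\rm RE}(2s,u)s^{-1/q}$. Tracking the $q$-dependent constants through this chain, especially in the case-split controlling $|\Delta_{J'}|_q$, is the delicate bookkeeping step where I expect the main computational effort.
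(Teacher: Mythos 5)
Your write-up in effect supplies a proof the paper omits: the paper states this lemma as ``due to \cite{GT}'' and gives no argument, so there is no in-paper proof to compare against; your route is the standard one. Your proofs of \eqref{k1} and \eqref{k3} are correct as written: the diagonal-dominance bound $|(\Psi\Delta)_{j_0}|\ge 1-\rho(1+u)s$ with $c=(1+u)/(1-\alpha)$ gives \eqref{k1}, and the chain H\"older / cone / Cauchy--Schwarz gives \eqref{k3} with $c=(1+u)^2$. For \eqref{k4}, all the ingredients you list do assemble, and in fact more easily than you anticipate: the cone membership $\Delta\in C_{J'}(u)$ is immediate from $|\Delta_{(J')^c}|_1\le|\Delta_{J^c}|_1\le u|\Delta_J|_1\le u|\Delta_{J'}|_1$, and no case split is needed, since $d_{(k)}\le |\Delta_{J^c}|_1/k$, $|\Delta_{J^c}|_1\le u s^{1-1/q}|\Delta_J|_q$ and $\sum_{k>s}k^{-q}\le s^{1-q}/(q-1)$ combine directly to give $|\Delta_{(J')^c}|_q^q\le \frac{u^q}{q-1}\,|\Delta_{J'}|_q^q$, hence $|\Delta_{J'}|_q\ge \bigl(1+\frac{u^q}{q-1}\bigr)^{-1/q}$; together with $|\Delta|_1\le (1+u)s^{1-1/q}$ and your power-mean step this yields \eqref{k4} with $c(q)$ depending only on $u$ and $q$, as claimed.

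The one point to be precise about is \eqref{k2}. Your scaling argument correctly proves $\kappa_q(s,u)\ge \bigl((1+u)s\bigr)^{-1/q}\kappa_\infty(s,u)$, but the $u$-dependence cannot be ``absorbed'' into the literal constant $(2s)^{-1/q}$, because \eqref{k2} as displayed is false once $u>1$. Take $\Psi=I_{p\times p}$ (Assumption C with $\rho=0$), $s=1$, $J=\{1\}$, $u=2$, $q=1$ and $\Delta=(1/3,1/3,1/3,0,\dots,0)$: then $\Delta\in C_J(2)$, $|\Delta|_1=1$, $|\Psi\Delta|_\infty=1/3$, while $\kappa_\infty(1,2)=1$, so $\kappa_1(1,2)\le 1/3<1/2=(2s)^{-1}\kappa_\infty(1,2)$. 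So your bound is the correct general form, and the factor $2$ in the paper's \eqref{k2} should be read as $1+u$ (which is how it is used: the paper only invokes the lemma through $\kappa_q(s,u)\ge cs^{-1/q}$ with a generic constant and $u=1+\lambda$ bounded, so nothing downstream is affected). Apart from this constant, your proposal is correct.
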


 Note that \eqref{k1} and \eqref{k2} yield the control of the sensitivities $\kappa_q$ under the Coherence assumption for all
$1\le q\le\infty$. The next lemma relates $\kappa_{\rm pr}$ to $\kappa_{1}$.

\begin{lemma}\label{lem:appC1} For any $u>0$, $1\le s\le p$,
$$\kappa_{\rm pr}(s,u) \geq \sqrt{\kappa_1(s,u)}.$$
\end{lemma}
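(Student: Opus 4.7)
The plan is to exploit the Hölder-type inequality $\Delta^T\Psi\Delta \le |\Delta|_1\,|\Psi\Delta|_\infty$, combined with the defining lower bound for $\kappa_1(s,u)$ applied to the normalized vector.

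Fix a subset $J$ with $|J|\le s$ and an arbitrary $\Delta\in C_J(u)$ with $|\Psi^{1/2}\Delta|_2=1$; I want to show $|\Psi\Delta|_\infty\ge\sqrt{\kappa_1(s,u)}$. First I would apply Hölder's inequality to the bilinear form to obtain
\begin{equation*}
1 \;=\; |\Psi^{1/2}\Delta|_2^2 \;=\; \Delta^T\Psi\Delta \;\le\; |\Delta|_1\,|\Psi\Delta|_\infty,
\end{equation*}
which yields one lower bound $|\Psi\Delta|_\infty \ge 1/|\Delta|_1$. Second, since $C_J(u)$ is a cone, the rescaled vector $\tilde\Delta := \Delta/|\Delta|_1$ still lies in $C_J(u)$ and satisfies $|\tilde\Delta|_1=1$, so by definition of $\kappa_1(s,u)$ one has $|\Psi\tilde\Delta|_\infty\ge\kappa_1(s,u)$, i.e., $|\Psi\Delta|_\infty\ge \kappa_1(s,u)\,|\Delta|_1$.

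Combining these two bounds gives
\begin{equation*}
|\Psi\Delta|_\infty \;\ge\; \max\!\left\{\frac{1}{|\Delta|_1},\ \kappa_1(s,u)\,|\Delta|_1\right\} \;\ge\; \sqrt{\kappa_1(s,u)},
\end{equation*}
where the last step uses the elementary fact that $\max\{a/t,bt\}\ge\sqrt{ab}$ for $a,b,t>0$ (achieved at $t=\sqrt{a/b}$). Taking the infimum over $\Delta\in C_J(u)$ with $|\Psi^{1/2}\Delta|_2=1$ and then the minimum over $J$ with $|J|\le s$ yields $\kappa_{\rm pr}(s,u)\ge\sqrt{\kappa_1(s,u)}$.

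There is no real obstacle here: the only thing to watch is the degenerate case $|\Delta|_1=0$ or $\kappa_1(s,u)=0$. If $|\Delta|_1=0$ then $\Delta=0$, contradicting $|\Psi^{1/2}\Delta|_2=1$, so this cannot occur. If $\kappa_1(s,u)=0$ the claimed inequality is trivial, so one can assume $\kappa_1(s,u)>0$ and use the argument above freely.
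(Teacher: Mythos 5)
Your proof is correct and follows essentially the same route as the paper's: both hinge on the key inequality $\Delta^T\Psi\Delta \le |\Delta|_1\,|\Psi\Delta|_\infty$ together with the definition of $\kappa_1(s,u)$ applied after rescaling to $|\Delta|_1=1$, the paper simply manipulating the ratio $|\Psi\Delta|_\infty/\sqrt{\Delta^T\Psi\Delta}\ge\sqrt{|\Psi\Delta|_\infty/|\Delta|_1}$ directly where you package the same content as $\max\{1/|\Delta|_1,\ \kappa_1(s,u)|\Delta|_1\}\ge\sqrt{\kappa_1(s,u)}$. Your handling of the degenerate cases ($\Delta=0$, $\kappa_1(s,u)=0$) is a minor but sound addition.
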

\begin{proof}
Fix a set $J$ such that $|J|\leq s$. Since $\Delta^T\Psi\Delta \le |\Psi\Delta|_\infty |\Delta|_1$, we obtain
\begin{align*}
 \min_{\Delta \in C_J(u):|\Psi^{1/2}\Delta|_2=1}|\Psi\Delta|_\infty  &= \min_{\Delta \in C_J(u):|\Psi^{1/2}\Delta|_2>0}|\Psi\Delta|_\infty/\sqrt{\Delta^T\Psi\Delta}\\
%&\geq \min_{\Delta \in C_J(u):|\Psi^{1/2}\Delta|_2>0}|\Psi\Delta|_\infty/\sqrt{|\Delta|_1|\Psi\Delta|_\infty}\\
&\ge
 \min_{\Delta \in C_J(u):|\Psi^{1/2}\Delta|_2>0}\sqrt{|\Psi\Delta|_\infty/|\Delta|_1}\\
&\ge \min_{\Delta \in C_J(u):|\Delta|_1>0}\sqrt{|\Psi\Delta|_\infty/|\Delta|_1}\\
& =  \min_{\Delta \in C_J(u):|\Delta|_1=1}\sqrt{|\Psi\Delta|_\infty},
\end{align*}
\noindent where we used the fact that $\{\Delta : |\Psi^{1/2}\Delta|_2>0\} \subseteq \{ \Delta : |\Delta|_1 > 0, |\Psi\Delta|_\infty>0 \}$. Taking the minimum over $J$ such that $|J|\leq s$ and using the definitions of  $\kappa_{\rm pr}(s,u)$ and  $\kappa_1(s,u)$ we obtain the result.
\end{proof}

%%%
\begin{lemma}\label{lem:appC2} If ${\rm rank}(X)=\min\{n,p\}$, then for any $u>0$, $1\le s\le p$,
$$\kappa_{\rm pr}(s,u) > 0.$$
\end{lemma}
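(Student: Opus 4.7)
The plan is to prove the uniform bound
$$
\min_{\Delta\in\R^p:\ |\Psi^{1/2}\Delta|_2=1}\bigl|\Psi\Delta\bigr|_\infty \ \ge \ \frac{\lambda^*}{\sqrt{p\,\lambda_{\max}(\Psi)}} \ > \ 0,
$$
where $\lambda^*$ denotes the smallest nonzero eigenvalue of $\Psi$. Once this is in hand, the inequality $\kappa_{\rm pr}(s,u)\ge \lambda^*/\sqrt{p\,\lambda_{\max}(\Psi)}>0$ is immediate, since enlarging the feasible set (by dropping the cone constraint $\Delta\in C_J(u)$) can only decrease the minimum, and the resulting lower bound is uniform in $J$ and in $\Delta$.

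The main obstacle is that when $p>n$, the matrix $\Psi$ is only positive semidefinite with a nontrivial kernel of dimension $p-n$. The ellipsoidal set $\{\Delta:|\Psi^{1/2}\Delta|_2=1\}$ is therefore unbounded (it is invariant under translation by elements of $\ker(\Psi)$), so one cannot invoke a naive compactness argument. I would circumvent this by reducing everything to $V^\perp$, where $V=\ker(\Psi)=\ker(\Psi^{1/2})=\ker(X)$ (the first two equalities follow because $\Psi^{1/2}$ and $\Psi$ are positive semidefinite with the same kernel, the last because $X^TX\Delta=0$ iff $|X\Delta|_2^2=0$). On $V^\perp$ the operator $\Psi$ is positive definite, with smallest eigenvalue equal to $\lambda^*$.

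Concretely, the rank assumption gives $\mathrm{rank}(\Psi)=\mathrm{rank}(X)=\min\{n,p\}$, so $\lambda^*>0$. For any $\Delta$, decompose orthogonally $\Delta=\Delta_1+\Delta_2$ with $\Delta_1\in V^\perp$ and $\Delta_2\in V$. Then $\Psi^{1/2}\Delta=\Psi^{1/2}\Delta_1$ and $\Psi\Delta=\Psi\Delta_1$, so everything depends only on $\Delta_1$. I would then chain three standard inequalities: (i) from $|\Psi^{1/2}\Delta_1|_2^2=\Delta_1^T\Psi\Delta_1\le \lambda_{\max}(\Psi)|\Delta_1|_2^2$ and $|\Psi^{1/2}\Delta_1|_2=1$, deduce $|\Delta_1|_2\ge 1/\sqrt{\lambda_{\max}(\Psi)}$; (ii) since $\Delta_1\in V^\perp$, positive definiteness of $\Psi|_{V^\perp}$ yields $|\Psi\Delta_1|_2\ge \lambda^*|\Delta_1|_2$; (iii) the elementary bound $|x|_\infty\ge |x|_2/\sqrt{p}$ in $\R^p$ gives
$$
|\Psi\Delta|_\infty=|\Psi\Delta_1|_\infty\ \ge\ \frac{|\Psi\Delta_1|_2}{\sqrt{p}}\ \ge\ \frac{\lambda^*\,|\Delta_1|_2}{\sqrt{p}}\ \ge\ \frac{\lambda^*}{\sqrt{p\,\lambda_{\max}(\Psi)}}.
$$
This bound is independent of $\Delta$, $J$, $s$ and $u$, so it is also a lower bound for $\kappa_{\rm pr}(s,u)$. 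Should the feasible set $\{\Delta\in C_J(u):|\Psi^{1/2}\Delta|_2=1\}$ happen to be empty for every $J$ with $|J|\le s$ (a trivial corner case), the convention $\min\emptyset=+\infty$ leaves the conclusion $\kappa_{\rm pr}(s,u)>0$ intact.
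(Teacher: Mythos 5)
Your proof is correct, but it follows a genuinely different route from the paper's. The paper reduces to the nontrivial case ${\rm rank}(X)=n<p$ and substitutes $\delta=X\Delta/\sqrt{n}\in\R^n$: every feasible $\Delta$ produces a $\delta$ on the unit sphere of $\R^n$, so the minimum is bounded below by $\min_{|\delta|_2=1}|X^T\delta/\sqrt{n}|_\infty$, and positivity then follows from compactness of the sphere in $\R^n$ together with injectivity of $X^T$ --- which is precisely where the rank hypothesis enters. That argument is soft and yields no explicit constant. You instead stay in $\R^p$, eliminate the unboundedness of $\{\Delta:|\Psi^{1/2}\Delta|_2=1\}$ by the orthogonal decomposition $\Delta=\Delta_1+\Delta_2$ with $\Delta_2\in\ker(\Psi)=\ker(\Psi^{1/2})=\ker(X)$, and chain three spectral inequalities to obtain the quantitative bound $\kappa_{\rm pr}(s,u)\ge \lambda^*/\sqrt{p\,\lambda_{\max}(\Psi)}$, where $\lambda^*$ is the smallest nonzero eigenvalue of $\Psi$; each step checks out ($\ker(\Psi^{1/2})=\ker(\Psi)$ for positive semidefinite matrices, $\Psi$ maps $V^\perp$ into $V^\perp$ with spectrum bounded below by $\lambda^*$ there, and $|x|_\infty\ge|x|_2/\sqrt p$ in $\R^p$), and you correctly handle the empty-feasible-set corner case. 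What your approach buys: an explicit lower bound uniform in $J$, $s$ and $u$, no compactness argument, and in fact a weaker hypothesis --- you never use ${\rm rank}(X)=\min\{n,p\}$ beyond ensuring $\Psi\neq 0$, so your argument proves the lemma for every nonzero $X$. What it costs: the factor $\sqrt{p\,\lambda_{\max}(\Psi)}$ and a possibly tiny $\lambda^*$ make the bound quantitatively weak when $p\gg n$, but since the lemma asserts only positivity, this is immaterial.
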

\begin{proof}
If ${\rm rank}(X)=p$, the result follows trivially, so we assume that ${\rm rank}(X)=n<p$. We have
\begin{align*}
& \min_{\Delta \in C_J(u):|\Psi^{1/2}\Delta|_2=1}|\Psi\Delta|_\infty  = \min_{\Delta \in C_J(u):|X\Delta/\sqrt{n}|_2=1}|X^TX\Delta/n|_\infty\\
&\geq \min_{\Delta \in \mathbb{R}^p :|X\Delta/\sqrt{n}|_2=1}|X^TX\Delta/n|_\infty
\geq \min_{\delta \in \mathbb{R}^n :|\delta|_2=1}|X^T\delta/\sqrt{n}|_\infty.
\end{align*}
Since ${\rm rank}(X^T)={\rm rank}(X)=n$, we have
$X^T\delta/\sqrt{n} \neq 0$ for all $\delta \in \mathbb{R}^n\setminus\{0\}$. Moreover, $\{\delta \in \mathbb{R}^n : |\delta|_2=1\}$ being compact, the minimum is achieved at some $\delta^*$, with $X^T\delta^*/\sqrt{n} \neq 0$, so that $|X^T\delta^*/\sqrt{n}|_\infty>0$. Taking the minimum over (the finite collection of) $J$ such that $|J|\leq s$ yields the result.
\end{proof}

\section{Kullback-Leibler divergence}
For $\theta\in\mathbb{R}^p$, recall that $c_{\theta}=\theta^T\Gamma\theta.$ Denote by $\lambda_{\min}$ and $\lambda_{\max}$ the smallest and largest eigenvalues of $\Gamma$. It is easy to see that they are functions of $\sigma^2_*$,  $\lambda_{\text{min}}^\Sigma$, and $\lambda_{\text{max}}^{\Sigma}$ only. Since the distribution of $X$ is now fixed, we write for brevity $\mathbb P_{X,\theta}=\mathbb P_{\theta}$. The following lemma is a crucial element in the proof of the lower bounds.

\begin{lemma}\label{lem:kullback}
Let $\theta_1\in \R^p$ and $\theta_2\in \R^p$ be such that $|\theta_1|_2=|\theta_1|_2$.
Under Assumption (A5),
$$
{\cal K}(\mathbb P_{\theta_1},\mathbb P_{\theta_2})\leq \frac{c\,n}{1+|\theta_1|_2^2}  \left(|\theta_1-\theta_2|_2^2+ |c_{\theta_1}-c_{\theta_2}|\right),%(\lambda_{\text{max}}-\lambda_{\text{min}}).
$$
where $c$ is a constant depending only on $\sigma^2_*$, $\sigma^2$, $\lambda_{\min}^\Sigma$, and $\lambda_{\max}^{\Sigma}$.
\end{lemma}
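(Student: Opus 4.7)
The plan is to exploit the fact that $(y_i, z_i)$ are i.i.d.\ across $i$ and that the marginal law of $z_i$ does not depend on $\theta$. First, since under Assumption (A5) the triples $(\xi_i, x_i, w_i)$ are jointly independent across $i$, the observations $(y_i, z_i)$, $i=1,\dots,n$, are i.i.d., so I would reduce to a per-row computation: $\mathcal{K}(\mathbb P_{\theta_1},\mathbb P_{\theta_2}) = n\,\mathcal{K}(\mathbb P^{(1)}_{\theta_1},\mathbb P^{(1)}_{\theta_2})$, where $\mathbb P^{(1)}_\theta$ denotes the law of $(y_1, z_1)$. Writing $p_\theta(y,z) = p(z)\,p_\theta(y\mid z)$ (since $z_1 \sim \mathcal N(0,\Sigma+\sigma_*^2 I)$ does not depend on $\theta$) reduces the problem to bounding the conditional KL $\mathbb E_{z}\bigl[\mathcal K(p_{\theta_1}(\cdot\mid z),p_{\theta_2}(\cdot\mid z))\bigr]$.

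Next, I would identify the conditional law. Since $(y_1,z_1)$ is jointly Gaussian with $\mathrm{Cov}(y_1,z_1)=\theta^T\Sigma$ and $\mathrm{Cov}(z_1)=\Sigma+\sigma_*^2 I$, the standard Gaussian conditioning formula gives
\[
y_1\mid z_1 \ \sim\ \mathcal N\bigl(\theta^T\Gamma z_1,\ v_\theta\bigr), \qquad v_\theta = \sigma_*^2\, c_\theta + \sigma^2,
\]
using the identity $\Sigma - \Sigma(\Sigma+\sigma_*^2 I)^{-1}\Sigma = \sigma_*^2\Gamma$ together with $\Sigma(\Sigma+\sigma_*^2 I)^{-1}=\Gamma$. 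The univariate Gaussian KL formula then yields
\[
\mathcal K\bigl(p_{\theta_1}(\cdot\mid z),p_{\theta_2}(\cdot\mid z)\bigr)=\tfrac12\Bigl[\bigl((\theta_1-\theta_2)^T\Gamma z\bigr)^2/v_{\theta_2} \,+\, \bigl(v_{\theta_1}/v_{\theta_2}-1-\log(v_{\theta_1}/v_{\theta_2})\bigr)\Bigr].
\]
Taking expectation over $z\sim\mathcal N(0,\Sigma+\sigma_*^2 I)$ in the cross term and using $\Gamma(\Sigma+\sigma_*^2 I)\Gamma = \Sigma\Gamma$ gives $\mathbb E\bigl[((\theta_1-\theta_2)^T\Gamma z)^2\bigr]=(\theta_1-\theta_2)^T\Sigma\Gamma(\theta_1-\theta_2)\le \lambda_{\max}^\Sigma |\theta_1-\theta_2|_2^2$.

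The main technical step, and the one I expect to be the principal obstacle, is the variance-ratio term. Here I would use the pointwise bound $u-1-\log u \le (u-1)^2/u$ for all $u>0$ with $u=v_{\theta_1}/v_{\theta_2}$, giving $(v_{\theta_1}-v_{\theta_2})^2/(v_{\theta_1}v_{\theta_2})=\sigma_*^4 (c_{\theta_1}-c_{\theta_2})^2/(v_{\theta_1}v_{\theta_2})$. The key denominator bound comes from noting that $\Gamma$ has smallest eigenvalue $\lambda_{\min}^\Sigma/(\lambda_{\min}^\Sigma+\sigma_*^2)>0$, so $c_{\theta_j}\ge \lambda_{\min}^\Gamma R^2$ (where $R=|\theta_1|_2=|\theta_2|_2$), hence $v_{\theta_j}\ge c_0(1+R^2)$ for some $c_0>0$ depending only on $\sigma^2,\sigma_*^2,\lambda_{\min}^\Sigma$. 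To turn the square $(c_{\theta_1}-c_{\theta_2})^2$ into the first power $|c_{\theta_1}-c_{\theta_2}|$ demanded by the target inequality, I would then use the equality $|\theta_1|_2=|\theta_2|_2$ to write $|c_{\theta_1}-c_{\theta_2}|\le \lambda_{\max}^\Gamma R^2 \le R^2$, so $(c_{\theta_1}-c_{\theta_2})^2 \le R^2 |c_{\theta_1}-c_{\theta_2}|$. Combining with the $(1+R^2)^2$ lower bound on $v_{\theta_1}v_{\theta_2}$ yields the required $|c_{\theta_1}-c_{\theta_2}|/(1+R^2)$ term; the same denominator handles the mean term. Multiplying by $n$ gives the claim with the constant $c$ depending only on $\sigma^2,\sigma_*^2,\lambda_{\min}^\Sigma,\lambda_{\max}^\Sigma$.
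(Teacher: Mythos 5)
Your proposal is correct and follows essentially the same route as the paper's proof: reduce to $n=1$ by tensorization, condition on $z$ (whose marginal law is $\theta$-free), identify the conditional law $y\mid z\sim\mathcal{N}\bigl(\theta^T\Gamma z,\ \sigma^2+\sigma_*^2 c_\theta\bigr)$, control the mean term through $\Gamma(\Sigma+\sigma_*^2 I_{p\times p})\Gamma=\Sigma\Gamma$, and lower-bound the variance denominators via $c_\theta\ge\lambda_{\min}^{\Gamma}|\theta|_2^2$ with $\lambda_{\min}^{\Gamma}=\lambda_{\min}^\Sigma/(\lambda_{\min}^\Sigma+\sigma_*^2)>0$. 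The only (valid) cosmetic difference is in the variance-ratio term, where the paper bounds the log-difference and the difference of reciprocals directly to get $|c_{\theta_1}-c_{\theta_2}|$ linearly, while you use $u-1-\log u\le (u-1)^2/u$ and then downgrade the square via $(c_{\theta_1}-c_{\theta_2})^2\le R^2\,|c_{\theta_1}-c_{\theta_2}|$, which works because $\lambda_{\max}^{\Gamma}<1$ and $|\theta_1|_2=|\theta_2|_2=R$.
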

\begin{proof} In view of the properties of Kullback divergence between product measures, it suffices to prove the lemma for $n=1$.
In the following, we denote by $c$ positive constants depending only on $\sigma^2_*$, $\sigma^2$, $\lambda_{\text{min}}^\Sigma$, and $\lambda_{\text{max}}^{\Sigma}$, which may vary from line to line.
Let $\theta\in\mathbb{R}^p$.
Consider the random vector $(U,V)$ where
$$V=(A_1+B_1,\ldots,A_p+B_p),$$ with $A=(A_1,\ldots,A_p)^T$ a Gaussian vector with covariance matrix $\Sigma$
and $B=(B_1,\ldots,B_p)^T$ a Gaussian vector with covariance matrix $\sigma^2_*I_{p\times p}$, independent of $A$ and
$$
U=\sum_{j=1}^p\theta_{j}(V_j-B_j)+\eta,
$$
where $\eta$ is a zero-mean Gaussian random variable with variance $\sigma^2$. We now find the conditional distribution $\mathcal{L}_{\theta}(U|V)$ of $U$ given $V$.
Remark first that $(V_1,\ldots,V_p,B_1,\ldots B_p)^T$ is a zero-mean Gaussian random vector with covariance matrix
\[ \left( \begin{array}{cc}
\Sigma+\sigma^2_*I_{p\times p} & \sigma^2_*I_{p\times p} \\
\sigma^2_*I_{p\times p} & \sigma^2_*I_{p\times p}
\end{array} \right),\]
so that $\mathcal{L}_{\theta}(B|V)$ is a Gaussian distribution with mean $\tilde \Sigma V$ and covariance matrix
$\sigma^2_*(I_{p\times p}-\tilde \Sigma)$. This easily implies that $\mathcal{L}_{\theta}(U|V)$ is Gaussian with mean $\theta^T\Gamma V$ and variance $\sigma^2+c_{\theta}\sigma^2_*$. Then the logarithm of the density of $\mathcal{L}_{\theta}(U|V)$, denoted by $l_{\theta}(U|V)$ satisfies
$$
l_{\theta}(U|V)=-\frac{1}{2}\text{log}(2\pi)-\frac{1}{2}\text{log}(\sigma^2+c_{\theta}\sigma^2_*)-\frac{1}{2(\sigma^2+c_{\theta}\sigma^2_*)}(U-\theta^T\Gamma V)^2.$$
Let now $\theta_1\in\mathbb{R}^p$ and $\theta_2\in\mathbb{R}^p$ with
$|\theta_1|_2=|\theta_1|_2.$
%First remark that
%\begin{equation}\label{diffc}
%|c_{\theta_1}-c_{\theta_2}|\leq(\lambda_{\text{max}}-\lambda_{\text{min}})|\theta_1|_2.\end{equation}
Then,
\begin{align*}
l_{\theta_1}(U|V)-l_{\theta_2}(U|V)&=\frac{1}{2}\left(\text{log}(\sigma^2+c_{\theta_2}\sigma^2_*) -\text{log}(\sigma^2+c_{\theta_1}\sigma^2_*)\right)
\\&
+
\frac{1}{2(\sigma^2+c_{\theta_2}\sigma^2_*)}\big((U-\theta_2^T\Gamma V)^2-(U-\theta_1^T\Gamma V)^2\big)%+c\frac{|c_{\theta_1}-c_{\theta_2}|}{1+|\theta_1|_2^2}%(\lambda_{\text{max}}-\lambda_{\text{min}})|\theta_1|_2R
\\&
+\left(\frac{1}{2(\sigma^2+c_{\theta_2}\sigma^2_*)}-\frac{1}{2(\sigma^2+c_{\theta_1}\sigma^2_*)}\right)(U-\theta_1^T\Gamma V)^2.
\end{align*}
%where $|R|\leq \frac{c}{1+|\theta_1|_2^2}$.
Since the distribution of $V$ does not depend on $\theta$, we obtain that in the case $n=1$,
\begin{align*}
{\cal K}(\mathbb P_{\theta_1},\mathbb P_{\theta_2})&=\frac{1}{2}\left(\text{log}(\sigma^2+c_{\theta_2}\sigma^2_*) -\text{log}(\sigma^2+c_{\theta_1}\sigma^2_*)\right)
\\&
+
\frac{1}{2(\sigma^2+c_{\theta_2}\sigma^2_*)}\E_{\theta_1}\big((U-\theta_2^T\Gamma V)^2-(U-\theta_1^T\Gamma V)^2\big)%+%(\lambda_{\text{max}}-\lambda_{\text{min}})|\theta_1|_2R
\\&
+\left(\frac{1}{2(\sigma^2+c_{\theta_2}\sigma^2_*)}-\frac{1}{2(\sigma^2+c_{\theta_1}\sigma^2_*)}\right)\E_{\theta_1}(U-\theta_1^T\Gamma V)^2,
\end{align*}
where $\E_{\theta_1}[\cdot]$ denotes the expectation when $\theta=\theta_1$ in the definition of $U$.
Using the inequality $|\text{log}(\sigma^2+x_1)-\text{log}(\sigma^2+x_2)| \le |x_1-x_2|/\min(\sigma^2+x_1, \sigma^2+x_2), x_1,x_2>0 $ and the fact that $c_{\theta}\ge \lambda_{\min}|\theta|_2^2$, we get
\begin{align*}
{\cal K}(\mathbb P_{\theta_1},\mathbb P_{\theta_2})&\le
 \frac{c}{1+|\theta_1|_2^2}\big|\E_{\theta_1}\big[(U-\theta_2^T\Gamma V)^2-(U-\theta_1^T\Gamma V)^2\big]\big|\\&+
  c\frac{|c_{\theta_1}-c_{\theta_2}|}{(1+|\theta_1|_2^2)^2}\left( (1+|\theta_1|_2^2) + \E_{\theta_1}[(U-\theta_1^T\Gamma V)^2] \right).
\end{align*}
We have
\begin{align*}
{\cal K}(\mathbb P_{\theta_1},\mathbb P_{\theta_2})&\leq \frac{c}{1+|\theta_1|_2^2}\Big|\E_{\theta_1}\Big[\big((\theta_1^T-\theta_2^T\Gamma)A-\theta_2^T\Gamma B\big)^2-\big((\theta_1^T-\theta_1^T\Gamma)A-\theta_1^T\Gamma B\big)^2\Big]\Big|
\\&
+c\frac{|c_{\theta_1}-c_{\theta_2}|}{(1+|\theta_1|_2^2)^2}
\big(1+ |\theta_1|_2^2+|\theta_1|_2^2\lambda_{\text{max}}^{\Sigma}+\sigma^2+|\theta_1|_2^2(\lambda_{\text{max}}^{\Sigma}+\sigma_{*}^2)\lambda_{\text{max}}^{2}\big).
\end{align*}
Hence
\begin{align*}
{\cal K}(\mathbb P_{\theta_1},\mathbb P_{\theta_2})&\leq \frac{c}{1+|\theta_1|_2^2}\Big|\E_{\theta_1}\Big[\big((\theta_1^T-\theta_2^T\Gamma)A-\theta_2^T\Gamma B\big)^2-\big((\theta_1^T-\theta_1^T\Gamma)A-\theta_1^T\Gamma B\big)^2\Big]\Big|
\\&+
c\frac{|c_{\theta_1}-c_{\theta_2}|}{1+|\theta_1|_2^2}.
\end{align*}
Then using the independence of $A$ and $B$, we get
\begin{align*}{\cal K}(\mathbb P_{\theta_1},\mathbb P_{\theta_2})&\leq \frac{c}{1+|\theta_1|_2^2}\big|(\theta_1^T-\theta_2^T\Gamma)\Sigma(\theta_1-\Gamma\theta_2)-(\theta_1^T-\theta_1^T\Gamma)\Sigma(\theta_1-\Gamma\theta_1)+(\theta_2^T\Gamma^2\theta_2-\theta_1^T\Gamma^2\theta_1)\big|\\&+c\frac{|c_{\theta_1}-c_{\theta_2}|}{1+|\theta_1|_2^2}.\end{align*}
Developing the preceding expression yields
\begin{align*}{\cal K}(\mathbb P_{\theta_1},\mathbb P_{\theta_2})&\leq \frac{c}{1+|\theta_1|_2^2}\big|2(\theta_1-\theta_2)^T\Gamma\Sigma\theta_1+\theta_2^T\Gamma(\Sigma+\sigma^2_*I_{p\times p})\Gamma\theta_2-\theta_1^T\Gamma(\Sigma+\sigma^2_*I_{p\times p})\Gamma\theta_1\big|\\&+c\frac{|c_{\theta_1}-c_{\theta_2}|}{1+|\theta_1|_2^2}.\end{align*}
The right hand side here can be rewritten as
$$\frac{c}{1+|\theta_1|_2^2}\Big(\big|(\theta_1-\theta_2)^T\Gamma\big(2\Sigma\theta_1-(\Sigma+\sigma^2_*I_{p\times p})\Gamma(\theta_1+\theta_2)\big)\big|+
|c_{\theta_1}-c_{\theta_2}|\Big).
$$
Since $(\Sigma+\sigma^2_*I_{p\times p})\Gamma=\Sigma$, we finally get
$${\cal K}(\mathbb P_{\theta_1},\mathbb P_{\theta_2})\leq \frac{c}{1+|\theta_1|_2^2}
\big(\big|(\theta_1-\theta_2)^T\Gamma\Sigma(\theta_1-\theta_2)\big|+
|c_{\theta_1}-c_{\theta_2}|\big),
$$
which implies the lemma.
%Then we consider the decomposition $\Sigma=P^TDP$, with $P$ an orthogonal matrix and $D$ a diagonal matrix containing the eigenvalues $(\lambda_1,\ldots,\lambda_p)$ of $\Sigma$. We obtain
%$$\tilde \Sigma=\sigma^2_*P^T\tilde DP,$$ with $\tilde D$ the diagonal matrix with $j$-th element $(\lambda_j+\sigma^2_*)^{-1}$.Consequently,
%$$\Gamma\Sigma=\Sigma-\sigma^2_*\tilde D D=P^TD^*P,$$ with  $D^*$ the diagonal matrix with $j$-th element $$\frac{\lambda_j^2}{\lambda_j+\sigma^2_*}.$$
%Using the orthogonality of the matrix $P$, we obtain the result.\\
\end{proof}

\section{Sufficient conditions for bounded $m_k$}\label{Sec5sm}

In this section, we provide conditions on the matrix $X$ guaranteeing that the quantity $m_k= \max_{j=1,\dots,p} \frac{1}{n}\sum_{i=1}^n |X_{ij}|^k$ concentrates around $\bar m_k= \max_{j=1,\dots, p} \frac{1}{n}\sum_{i=1}^n\mathbb{E}[|X_{ij}|^k]$. We are particularly interested in the cases $k=2$ and $k=4$ since such a concentration can be used to assure that assumptions of Corollary 1 hold.  However, the argument below is valid for any positive $k$. We assume that the independent random vectors $X_1,\ldots,X_n \in \R^p$ satisfy
\begin{itemize}
\item[(B)] $\mathbb{E}[ \max_{i\leq n}|X_i|_\infty^k]n^{-1}\log (p)=o(\bar m_k)$ as $n\to \infty$.
\end{itemize}
The condition is fairly mild and allows for $p>n$ under sub-exponential tail conditions. Sharper bounds are available in the Gaussian case (see remark below).

The argument is as follows. %Clearly we have $\bar m_k \leq m_k$. By the triangle inequality and symmetrization lemma (see Lemma 2.3.6 of \cite{van1996weak}) we have
Lemma 9.1 in \cite{LPTvdG} with $m=1$ yields
$$
a:=\mathbb{E}[\max_{j\leq p}|\frac{1}{n}\sum_{i=1}^n (|X_{ij}|^k-\mathbb{E}[|X_{ij}|^k])|]
\leq  \sqrt{\frac{8\log (2p)}{n}} \mathbb{E}[ \max_{j\leq p} \{\frac{1}{n}\sum_{i=1}^n X_{ij}^{2k} \}^{1/2}].
$$
%$$ \begin{array}{rl}
%\mathbb{E}[\max_{j\leq p}\frac{1}{n}\sum_{i=1}^n |X_{ij}|^k] & \leq \bar m_k + \mathbb{E}[\max_{j\leq p}|\frac{1}{n}\sum_{i=1}^n (|X_{ij}|^k-\mathbb{E}[|X_{ij}|^k])|]\\
%& \leq \bar m_k + 2\mathbb{E}[\max_{j\leq p}|\frac{1}{n}\sum_{i=1}^n r_i X_{ij}^k|]
%\end{array}$$
%where $r_i$ are Rademacher random variables. Therefore for some universal constant $C$ ($\leq 12$), by Dudley's integral (\cite{talagrand}, Proposition 2.2.10) we have
Thus, 
$$\begin{array}{rl}
a
& \leq \sqrt{\frac{8\log (2p)}{n}}  \mathbb{E}[ \max_{i\leq n}|X_i|_\infty^{k/2}\max_{j\leq p} \{\frac{1}{n}\sum_{i=1}^n |X_{ij}|^k \}^{1/2}]\\
& \leq \sqrt{\frac{8\log (2p)}{n}}  \mathbb{E}[ \max_{i\leq n}|X_i|_\infty^k]^{1/2}\{\mathbb{E}[\max_{j\leq p} \frac{1}{n}\sum_{i=1}^n |X_{ij}|^k]\}^{1/2}\\
& \leq  \delta_n (a +\bar m_k)^{1/2}%\{\mathbb{E}[\max_{j\leq p} |\frac{1}{n}\sum_{i=1}^n (|X_{ij}|^k-\mathbb{E}[|X_{ij}|^k])|]+\bar m_k\}^\\
\end{array}$$
where $\delta_n =\sqrt{\frac{8\log (2p)}{n}}  \mathbb{E}[ \max_{i\leq n}|X_i|_\infty^k]^{1/2}$. This implies that $a\leq \delta^2_n+\delta_n \bar m_k^{1/2}$, and finally
$$
\begin{array}{c}\mathbb{E}[\max_{j\leq p}\frac{1}{n}\sum_{i=1}^n |X_{ij}|^k]  \leq \bar m_k +a \le \bar m_k + \delta^2_n+\delta_n \bar m_k^{1/2}.
\end{array}
$$
Therefore, by (B), we have $\mathbb{E}[m_k]=\bar m_k+o(\bar m_k)$ as the sample size $n$ grows.

~\\
{\bf Remark.} (Gaussian Case) {\it Let $X_1,\ldots,X_n \in \R^p$ be independent Gaussian random vectors such that $X_{ij}\sim {\cal N}(0,\sigma_j^2)$ for $j=1,\dots,p$. By \cite{LT1996}, page 21, equation (1.6), and the union bound, there exists a universal constant $\bar C\geq 1$ such that for any $k\geq 2$ and $\gamma \in (0,1)$
$$ \mathbb{P}\left[m_k^{1/k} \geq \bar C \sqrt{k} \max_{1\leq j\leq p}\sigma_j + n^{-1/k}\sqrt{2\log(2p/\gamma)}\max_{1\leq j\leq p}\sigma_j \right] \leq \gamma.$$
}

\end{document}